\newcommand{\nc}{\newcommand}
\numberwithin{equation}{section}
\newtheorem{thm}{Theorem}[section]
\newtheorem*{theorem}{Main Theorem}
\newtheorem{prop}[thm]{Proposition}
\newtheorem{lem}[thm]{Lemma}
\newtheorem{cor}[thm]{Corollary}
\newtheorem*{coro}{Corollary}
\newtheorem*{rem*}{Remark}
\theoremstyle{remark}
\newtheorem{rem}[thm]{Remark}
\newtheorem{definition}[thm]{Definition}
\newtheorem{example}[thm]{Example}
\newtheorem{dfn}[thm]{Definition}
\nc{\gl}{\mathfrak{gl}}
\nc{\GL}{\mathfrak{GL}}
\nc{\g}{\mathfrak{g}}
\nc{\gh}{\widehat\g}
\nc{\fh}{\mathfrak{h}}
\nc{\fu}{\mathfrak{u}}
\nc{\la}{\lambda}
\nc{\al}{\alpha }
\nc{\be}{\beta }
\nc{\ve}{\varepsilon }
\nc{\om}{\omega }
\nc{\ta}{\theta}
\nc{\veps}{\varepsilon}
\nc{\ch}{{\mathop {\rm ch}}}
\nc{\Tr}{{\mathop {\rm Tr}\,}}
\nc{\Id}{{\mathop {\rm Id}}}
\nc{\ad}{{\mathop {\rm ad}}}
\nc{\bra}{\langle}
\nc{\ket}{\rangle}
\nc{\x}{{\bf x}}
\nc{\bs}{{\bf s}}
\nc{\bz}{{\bf z}}
\nc{\bp}{{\bf p}}
\nc{\bfp}{{\mathbf p}}
\nc{\bfx}{{\mathbf x}}
\nc{\bfq}{{\bf q}}
\nc{\bq}{{\bf q}}
\nc{\br}{{\bf r}}
\nc{\bc}{{\bf c}}
\nc{\bt}{{\bf t}}
\nc{\bff}{{\bf f}}
\nc{\pa}{\partial}
\nc{\ld}{\ldots}
\nc{\cd}{\cdots}
\nc{\hk}{\hookrightarrow}
\nc{\T}{\otimes}
\nc{\gr}{\mathrm{gr}}
\nc{\ov}{\overline}
\nc{\cO}{\mathcal O}
\nc{\msl}{\mathfrak{sl}}
\nc{\msp}{\mathfrak{sp}}
\nc{\mgl}{\mathfrak{gl}}
\nc{\cS}{\mathcal S}
\nc{\U}{\mathrm U}
\nc{\V}{\EuScript V}
\nc{\bH}{\EuScript H}
\nc{\Res}{\mathrm{Res\ }}
\newcommand{\ol}{\overline}
\newcommand{\bC}{{\mathbb C}}
\newcommand{\bU}{{\mathbb U}}
\newcommand{\bV}{{\mathbb V}}
\newcommand{\bR}{{\mathbb R}}
\newcommand{\bZ}{{\mathbb Z}}
\newcommand{\bN}{{\mathbb N}}
\newcommand{\bP}{{\mathbb P}}
\newcommand{\bG}{{\mathbb G}}
\newcommand{\fg}{{\mathfrak g}}
\newcommand{\fn}{{\mathfrak n}}
\newcommand{\Fl}{\EuScript{F}}
\begin{document}

\title[Favourable modules]
{Favourable modules: Filtrations, polytopes, Newton-Okounkov bodies and flat degenerations}

\author{Evgeny Feigin, Ghislain Fourier and Peter Littelmann}
\address{Evgeny Feigin:\newline
Department of Mathematics,\newline
National Research University Higher School of Economics,\newline
Vavilova str. 7, 117312, Moscow, Russia,\newline
{\it and }\newline
Tamm Theory Division, Lebedev Physics Institute
}
\email{evgfeig@gmail.com}
\address{Ghislain Fourier:\newline
Mathematisches Institut, Universit\"at zu K\"oln,\newline
Weyertal 86-90, D-50931 K\"oln,Germany\newline
{\it and }\newline
School of Mathematics and Statistics\newline
University of Glasgow\\
UK
}
\email{gfourier@math.uni-koeln.de, Ghislain.Fourier@glasgow.ac.uk}
\address{Peter Littelmann:\newline
Mathematisches Institut, Universit\"at zu K\"oln,\newline
Weyertal 86-90, D-50931 K\"oln, Germany
}
\email{peter.littelmann@math.uni-koeln.de}

\begin{abstract}
We introduce the notion of a favourable module for a complex unipotent algebraic group, whose properties are governed
by the combinatorics of an associated polytope. We describe two filtrations of the module,
one given by the total degree on the PBW basis of the corresponding Lie algebra, the other by
fixing a homogeneous monomial order on the PBW basis.

In the favourable case a basis of the module is parametrized by the lattice points of a normal polytope. The
filtrations induce flat degenerations of the corresponding flag variety to its abelianized version and to a toric variety, the
special fibres of the degenerations being projectively normal and arithmetically Cohen-Macaulay. The
polytope itself can be recovered as a Newton-Okounkov body. We conclude the paper by giving classes of examples
for favourable modules.
\end{abstract}

\maketitle

\section*{Introduction}
Let $\bU$ be a complex algebraic unipotent group acting on a cyclic finite dimensional complex vector space $M$,
so for the nilpotent Lie algebra ${\mathfrak N}=\text{Lie\,}\bU$  and a cyclic vector $v_M$ we have $M=\U({\mathfrak N})v_M$.
A well known  example we have in mind is a maximal unipotent subgroup $\bU$ of a reductive algebraic group $G$
acting on an irreducible finite dimensional representation of $G$.

We call such a module $M$ {\it favourable} if important properties of the module are governed by
polytope combinatorics. More precisely, starting with an ordered basis $\{f_1,\dots, f_N\}$  of ${\mathfrak N}$
and an induced homogeneous monomial ordering ``$\le$" on the monomials in $U({\mathfrak N})$, consider the induced
filtration of $M$ defined by
$$
M_{\bq}=\text{$\bC$-span of }\{ \bff^\bp v_M=f_1^{p_1}\dots f_N^{p_N}v_M \text{\ for all } \bff^\bp\le\bff^\bq\}
$$
where $\bp,\bq\in \bN^{N}$ are multi-indices. In the associated graded module $\text{gr\,}^t M$ every homogeneous
component $\text{gr\,}^t M(\bp)$ is at most one-dimensional. Following Vinberg,
we call a monomial $\bff^\bp\in U({\mathfrak N})$
{\it essential for $M$} if $\text{gr\,}^t M(\bp)$ is non-zero, the exponent $\bp$ is called an essential multi-index for $M$.
The set $\text{es}(M)$ of all essential multi-indices is a finite subset of $\bZ^N$.

The first condition for $M$ to be favourable is that there exists a normal lattice 
polytope $P(M)\subset \bR^N$ such that
the lattice points $S(M)$ in $P(M)$ are exactly $\text{es}(M)$.\\ 
Recall the definition of
the Cartan component in the $n$-fold tensor product of $M$:
$$
M^{\odot n}=\U({\mathfrak N})(v_M\otimes \cdots \otimes  v_M)\subseteq M^{\otimes n}.
$$
The second condition is: $\dim M^{\odot n}=\sharp nS(M)$ for all $n\in\bN$,
it concerns the comparison of the number of points in the Minkowski sum $nS(M)$ with the dimension of
$M^{\odot n}$.

Recall the PBW-filtration of $\U({\mathfrak N})$ given by the span of all monomials up to a fixed total degree.
Since $M$ is cyclic, we get a natural induced filtration on $M$, which is coarser than the filtration above. The associated
graded space is denoted by $\text{gr}^a\, M$. Note that $\text{gr}^a\, M$ and $\text{gr}^t\, M$ are not anymore
$U({\mathfrak N})$-modules, but they are cyclic modules with generator $v_{M^a}$ respectively $v_{M^t}$ for the commutative algebra
$U({\mathfrak N}^a)$. Here ${\mathfrak N}^a$ is the abelian Lie
algebra with the same underlying vector space as $\mathfrak N$. Similarly on the group level, we have a commutative
unipotent group $\bU^a$ with Lie algebra ${\mathfrak N}^a$ acting on $\text{gr}^a\, M$ and $\text{gr}^t\, M$.

%\text{gr}^a\,

\begin{theorem}
Let $M$ be a favourable $\bU$-module and let  $P(M)$ be the associated normal lattice polytope.
\begin{enumerate}
%\item $P(M)$ is a normal polytope, i.e. $P(M)$ is an integer lattice polytope such that
%the set of lattice points in $nP(M)$ coincides with $nS(M)$.
\item 
Let $S(M)$  be the lattice points in $P(M)$.
The set $\{ \bff^\bp v_{M}^{\otimes n}\mid \bp\in nS(M)\}$ is a basis for $M^{\odot n}$ as well as for its
graded versions $\text{gr}^a\,M^{\odot n}$ and $\text{gr}^t\,M^{\odot n}$.
\end{enumerate}
\end{theorem}
To an action of a unipotent group, we associate a projective variety, which we
call a flag variety by analogy with the classical highest weight orbits of reductive
groups:
$$
\Fl_{\bU}(M)=\overline{\bU.[v_M]}\subseteq \bP(M),\quad
\Fl_{\bU^a}(\text{gr}^a\,M)=\overline{\bU^a.[v_{M^a}]}\subseteq \bP(\text{gr}^a\,M)
$$
respectively $\Fl_{\bU^a}(\text{gr}^t\,M)=\overline{\bU^a.[v_{M^t}]}\subset \bP(\text{gr}^t\,M)$.
\begin{theorem}
Let $M$ be a favourable $\bU$-module and let  $P(M)$ be the associated normal lattice polytope.
\begin{enumerate}
\item[{(ii)}] ${\Fl}_{\bU^a}(\text{gr}^t\,M)\subseteq \bP(\text{gr}^t\,M)$ is the toric variety defined by the polytope $P(M)$.
\item[{(iii)}] There exists a flat degeneration of ${\Fl}_{\bU}(M)$ into
${\Fl}_{\bU^a}(\text{gr}^a\,M)$, and for both there exists a flat degeneration
into ${\Fl}_{\bU^a}(\text{gr}^t\,M)$.
\item[{(iv)}] The projective flag varieties ${\Fl}_{\bU}(M)\subseteq \bP(M)$, 
${\Fl}_{\bU^a}(\text{gr}^a\,M)\subseteq \bP(\text{gr}^a\,M)$ and its toric version
${\Fl}_{\bU^a}(\text{gr}^t\,M)\subseteq \bP(\text{gr}^t\,M)$ are projectively normal and arithmetically Cohen-Macaulay
varieties.
%\item[{(v)}] For all $n\ge 1$ one has the following isomorphisms: $\Fl_{\bU}(M^{\odot n})\simeq \Fl_{\bU}(M)$,
%$\Fl_{\bU^a}(\text{gr}^a\, M^{\odot n})\simeq \Fl_{\bU^a}(\text{gr}^a\,M)$
%and $\Fl_{\bU^a}(\text{gr}^t\,M^{\odot n}) \simeq \Fl_{\bU^a}(\text{gr}^t M)$.
\end{enumerate}
\end{theorem}
By construction, $\bU$ as well as its abelianized version $\bU^a$ has a dense and open orbit in the
corresponding flag varieties.  
For a projective variety $X$ (and a fixed valuation $\nu$ of its
function field), let  $\Delta(X)$ denote its corresponding Newton-Okounkov body (for details
see Section~\ref{NObody}).
\begin{theorem} Let $M$ be a favourable $\bU$-module and let  $P(M)$ be the associated normal lattice polytope.
Let $v_M$ be a cyclic generator and let $\bV\subset \bU$ be the stabilizer
of $[v_M]\in \mathbb P(M)$. If $\bU$ satisfies some mild conditions (see Section~\ref{NObody}) and $M$ is favourable, then we have for the Newton-Okounkov bodies:
\begin{enumerate}
\item[{(v)}] The polytope $P(M)$ is the Newton-Okounkov body for the flag variety, its abelianized version and its toric version,
i.e. 
$$
\Delta(\Fl_{\bU}(M))=P(M)=\Delta(\Fl_{\bU^a}(\text{gr}^a\, M)).
$$
\end{enumerate}
\end{theorem}
Our main example and motivation for the study of these polytopes is our ongoing research on PBW-filtrations
and the associated degenerate flag varieties for the classical algebraic groups. Let $G$ be a simply connected
simple algebraic group with Lie algebra $\fg$. Fix a Cartan decomposition $\fg=\fn^-\oplus\fh\oplus \fn$, and
let $\bU$ be the maximal unipotent subgroup of $G$ with Lie algebra ${\mathfrak N}=\fn^-$.
As an immediate consequence of the results in \cite{FFoL1,FFoL2,G} we see that
\begin{coro}
For $G=SL_n(\bC)$, $G=Sp_{2n}(\bC)$ and $G=G_2$, there exists an ordering of the positive roots and a homogeneous
ordering on the monomials in $U(\mathfrak N)$ such that all irreducible finite dimensional $G$-modules are
favourable for $\bU$ with a highest weight vector as cyclic generator.
\end{coro}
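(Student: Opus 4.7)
The plan is to verify the two defining conditions of favourability (existence of the polytope whose lattice points are the essential exponents, and the Minkowski-sum/Cartan-component dimension equality) case by case, in each case invoking the explicit combinatorial description of a PBW-type basis that is already established in \cite{FFoL1,FFoL2,G}. Fix $G$ to be one of $SL_n(\bC)$, $Sp_{2n}(\bC)$ or $G_2$, let $\fn^-$ be its lower nilradical with basis $\{f_\be\}_{\be\in R^+}$ given by root vectors for the positive roots, and let $V_\la$ be an irreducible highest weight module with highest weight vector $v_\la$.

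First I would recall from \cite{FFoL1,FFoL2,G} the ordering of $R^+$ and the homogeneous monomial order on $U(\fn^-)$ used in those papers; these orderings are defined purely in terms of the root system and do not depend on $\la$. The main results of these three papers exhibit, for each dominant $\la$, an explicit lattice polytope $P(\la)\subset\bR^N$ (the FFLV polytope in types $A$, $C$, and its analogue for $G_2$) together with the statement that the PBW monomials indexed by lattice points of $P(\la)$ form a basis of $V_\la$, and that the corresponding exponents are exactly the essential multi-indices $\text{es}(V_\la)$ with respect to the chosen monomial order. This gives condition (a) in the definition of favourability, with $P(V_\la)=P(\la)$ and $S(V_\la)=$ lattice points of $P(\la)$.

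Next I would verify condition (b): $\dim V_\la^{\odot n}=\sharp\,nS(V_\la)$. The Cartan component satisfies $V_\la^{\odot n}\simeq V_{n\la}$ (by the standard construction $\bU.(v_\la\otimes\cdots\otimes v_\la)$ and the fact that the highest weight vectors in $V_\la^{\otimes n}$ generate an irreducible submodule). Thus it suffices to show
$$
\dim V_{n\la}=\sharp\,nS(V_\la).
$$
The left-hand side equals $\sharp S(V_{n\la})=\sharp(P(n\la)\cap\bZ^N)$ by step one. The identity $P(n\la)=nP(\la)$ (dilation of the FFLV polytope under scaling of the weight) and the normality of the FFLV polytopes are both established in \cite{FFoL1,FFoL2,G}; combined, these give $\sharp(nP(\la)\cap\bZ^N)=\sharp\,nS(V_\la)$, completing the verification.

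The only conceptual obstacle is the normality statement together with the scaling identity $P(n\la)=nP(\la)$; everything else is a direct quotation from the cited papers. Since these two ingredients are proved there for all three families, the corollary follows at once by applying the definition of a favourable module term by term.
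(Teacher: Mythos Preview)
Your proposal is correct and follows essentially the same route as the paper: check the two conditions in Definition~\ref{singleconeredfn} using the explicit FFLV-type polytopes $P(\la)$, the identification $V_\la^{\odot n}\simeq V_{n\la}$, and the Minkowski/normality properties of the polytopes. This is exactly what the paper does, stating the corollary in the introduction as an immediate consequence of \cite{FFoL1,FFoL2,G} and unpacking the verification in Section~\ref{flags}.

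One point of caution on attribution. For $G_2$ the reference \cite{G} indeed gives a basis of $V(\la)^t$, which is equivalent to $S(\la)=\text{es}(V(\la))$. For types $A$ and $C$, however, \cite{FFoL1,FFoL2} prove that $\{\bff^\bp v_\la:\bp\in S(\la)\}$ is a basis of $V(\la)^a$; this is graded only by total degree and does not by itself pin down the essential monomials for the finer total monomial order. The paper therefore supplies a short separate argument (the theorem following Lemma~\ref{polyacg}) identifying $\text{es}(V(\omega_k))=S(\omega_k)$ explicitly for fundamental weights and then bootstrapping via Proposition~\ref{plus}. Likewise the Minkowski identity $S(\la+\mu)=S(\la)+S(\mu)$, from which both $P(n\la)=nP(\la)$ and normality are derived (Lemma~\ref{polyacg}), is taken from \cite{FFoL3} and \cite{G} rather than \cite{FFoL1,FFoL2}. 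None of this affects the validity of your argument, only which papers you should cite for which step.
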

The projective normality and the Cohen-Macaulay property of the flag variety
${\Fl}_{\bU^a}(\text{gr}^a\,M)$
were proved in \cite{FF} and \cite{FFiL} for $G=SL_n(\bC),Sp_{2n}(\bC)$ using an explicit desingularization;
the same result can be derived via the realization of the degenerate flag varieties in types $A$ and $C$ as Schubert varieties
\cite{CL}, \cite{CLL}.
For an explicit description of the corresponding polytopes and more examples see Section~\ref{flags}, where we 
also discuss the relation to Gelfand-Tsetlin and other string polytopes. 

In Section~\ref{orbit} we describe the coordinate ring of the flag varieties.
In Section~\ref{reps} we recall some generalities about filtrations and introduce the fundamental notions,
in Section~\ref{cr} we discuss the connection between filtrations and degenerations of flag varieties to toric varieties.
In Section~\ref{toricsection} we make a link to the toric geometry and in Sections~\ref{hcr1} and \ref{hcr2} we study the homogeneous coordinate rings.
In Section~\ref{favourablemodule} we introduce the notion of a favourable module and prove some first properties,
in Section~\ref{flatdegeneration} we show part {(iv)} %and {(v)} 
of the Main Theorem.  In Section~\ref{NObody} we show part {(v)} of the Main Theorem.
In Section~\ref{multiconeversion} we discuss a multi-cone version of the Main Theorem.
Examples are discussed in Section~\ref{flags}.

%%%%%%%%%%%%%%%%%%%%%%%%%%%%%%%%%%%%%%%%%%%%%%%%%%%%%%%%%%%%%%%%%%%%%%%%%%%%%%%%%%%%%%%%%%%%%%%%%%%%%%%%%%%%%%%%%%%%%%%%%%%%%%%%%
\section{The coordinate ring of an orbit closure}\label{orbit}
Let $M$ be a cyclic finite dimensional module for a connected complex algebraic group $H$.
Fix a cyclic generator $m_0\in M$. In the following we identify the \hbox{$\ell$-th} symmetric power 
$S^\ell(M)$ of $M$ with the symmetric tensors in the $\ell$-fold tensor product of $M$.
Another way to identify $S^\ell(M)$ with a subspace of $M^{\otimes \ell}$ is to view $S^\ell(M)$ as the linear
span $\langle GL(M). m_0^{\otimes \ell}\rangle \subset M^{\otimes \ell}$ of the orbit $GL(M). m_0^{\otimes \ell}$.

By the Cartan component $M^{\odot \ell}$ of the $\ell$-th tensor product $M^{\otimes \ell}$ we mean
the $H$-submodule 
$$
M^{\odot \ell}= \langle H. m_0^{\otimes \ell}\rangle\subseteq  \langle GL(M). m_0^{\otimes \ell}\rangle=S^\ell(M) \subseteq M^{\otimes \ell}.
$$
Note that $M^{\odot \ell}$ is a subspace of $S^\ell(M)$.

\subsection{The \texorpdfstring{$\ell$}{l}-tuple embedding}
We have a natural $GL(M)$-equivariant map of degree $\ell$:
$$
\phi: M\rightarrow S^\ell(M)\subset M^{\otimes \ell},\quad m\mapsto m^{\otimes \ell},
$$
which induces the $\ell$-tuple embedding:
$$
\bar\phi: \mathbb{P}(M)\rightarrow \mathbb{P}(S^\ell(M)),\quad [m]\mapsto [m^{\otimes \ell}].
$$
Next consider the varieties $X_1:=\overline{H.[m_0]}\subseteq \mathbb{P}(M)$ and
$X_\ell:=\overline{H.[m_0^{\otimes\ell}]}\subseteq \mathbb{P}(S^\ell(M))$. The map $\bar\phi$
is $GL(M)$-equivariant and an  isomorphism onto the image, so 
$$
\bar\phi(X_1)=\bar\phi(\overline{H.[m_0]})=\overline{H.[m_0]^{\otimes\ell}}=X_\ell.
$$
The orbit closure $X_\ell$ is nothing but the image of $X_1$ with respect to the $\ell$-tuple embedding.
Let  $\hat X_\ell\subseteq S^\ell(M)$ be the affine cone over $X_\ell$, then %, by the above,
%the $U$-Cartan component $M^{\odot \ell}\subset  S^\ell(M)$ is nothing but the linear span of $\hat X_\ell$
$$
M^{\odot \ell}=\langle \hat X_\ell\rangle \subseteq S^\ell(M).
$$
\subsection{Coordinate rings}
Let $\mathbb C[M]$ be the ring of polynomial functions. Endowed with the standard grading
$\mathbb C[M]=\bigoplus_{p\ge 0} \mathbb C[M]_p$ we view the ring also as the homogeneous
coordinate ring of $\mathbb P(M)$.

Similarly, let $\mathbb C[X_1]$ be the coordinate ring of the affine cone $\hat X_1$ over $X_1$. Endowed with the standard grading
$\mathbb C[X_1]=\bigoplus_{p\ge 0} \mathbb C[X_1]_p$ we view the ring as the homogeneous coordinate
ring of the embedded variety $X_1\hookrightarrow \mathbb P(M)$.

We %set $M^{\odot \ell}=M\odot\dots \odot M$ ($\ell$ times), and 
let $R_\ell(M)=(M^{\odot \ell})^*$
be the dual space. The vector space
\begin{equation}\label{gradingtotaldegree}
R(M)=\bigoplus_{\ell\ge 0} R_\ell(M)
\end{equation}
is then naturally endowed with a ring structure, where the multiplication maps $R_n\T R_k\to R_{n+k}$ are induced by the embeddings
$$
M^{\odot (n+k)}\hookrightarrow M^{\odot n}\T M^{\odot k}.
$$
\begin{prop}\label{repdescription}
$\mathbb C[X_1]\simeq R(M)$.
\end{prop}
\begin{proof}
Recall that the $\ell$-tuple embedding induces an isomorphism:
$$
\phi^*_{1,\ell}:\mathbb C[S^\ell (M)]_1 \rightarrow  \mathbb C[M]_\ell.
$$ 
Combining $\phi^*_{1,\ell}$ with the restriction homomorphisms induced by the embeddings $X_1\subset \mathbb P(M)$
and $X_\ell \subset \mathbb P(S^\ell (M))$, we get the following 
commutative diagram, for which the rows are isomorphisms and the down arrows are epimorphisms:
$$
\begin{array}{ccc}
\phi^*:\mathbb C[S^\ell (M)]_1 &\rightarrow &\mathbb C[M]_\ell\\
\\
\downarrow\hbox{\small res}_{X_\ell}&\hbox{\rm  } &\downarrow\hbox{\small res}_{X_1}\\
\\
\phi^*:\mathbb C[X_\ell]_1 &\rightarrow &\mathbb C[X_1]_\ell\\
\end{array},\quad
\begin{array}{ccc}
f&\mapsto &f\circ \phi\\
\downarrow&&\downarrow\\
f\vert_{X_\ell}&\mapsto&(f\circ \phi)\vert_{X_1}=f\vert_{X_\ell}\circ \phi \vert_{X_1}.
\end{array}
$$
The restriction of a linear function
on $S^\ell (M)$ to the affine cone $\hat X_\ell$ vanishes if and only if it vanishes on the linear span
$\langle \hat X_\ell\rangle = M^{\odot \ell}$, and hence $\mathbb C[X_\ell]_1=(M^{\odot \ell})^*$.
It follows that
$$
\mathbb C[X_1]=\bigoplus_{\ell\ge 0} \mathbb C[X_1]_\ell =\bigoplus_{\ell\ge 0} (M^{\odot \ell})^*=\bigoplus_{n\ge 0} R_n(M)=R(M).
$$
\end{proof}

\section{Representations and filtrations}\label{reps}
\subsection{PBW filtration}
Let $\bU$ be a complex algebraic unipotent group acting on a cyclic finite dimensional complex vector space $M$,
so for the nilpotent Lie algebra ${\mathfrak N}=\text{Lie\,}\bU$  and a cyclic vector $v_M$ we have $M=\U({\mathfrak N})v_M$.
The PBW filtration of $\U({\mathfrak N})$ is defined by $\U({\mathfrak N})_s= \mathrm{span}\{x_1\dots x_l,\ l\le s, x_i\in{\mathfrak N}\}$, the associated
graded algebra is the symmetric algebra $S({\mathfrak N})$ over ${\mathfrak N}$. Let $\mathfrak N^a$ be the same vector space as $\mathfrak N$
but endowed with the trivial Lie bracket. Then $S({\mathfrak N})=U(\mathfrak N^a)$ is the enveloping algebra of the abelianized version 
$\mathfrak N^a$ of $\mathfrak N$. The increasing filtration
$$
\U({\mathfrak N})_0=\bC1\subseteq \U({\mathfrak N})_1\subseteq \U({\mathfrak N})_2\subseteq\ldots
$$
of $\U({\mathfrak N})$ defines an induced increasing filtration on $M$
$$
PF_0(M)=\bC v_M\subseteq PF_1(M) \subseteq PF_2(M)\subseteq\ldots,
$$
where 
$$
PF_s(M) = \U({\mathfrak N})_sv_M = \mathrm{span}\{x_1\dots x_lv_M,\ l\le s, x_i\in{\mathfrak N}\}.
$$
The {\it associated graded module}
$$
M^a=\bigoplus_{s\ge 0} PF_s(M)/PF_{s-1}(M)
$$
is naturally endowed with the structure of a
graded $U(\mathfrak N^a)$-module, each element $x\in {\mathfrak N}\setminus\{0\}$ induces an operator of degree $1$ on $M^a$.
We denote by $v_{M^a}$ the image of the cyclic generator $v_M$ in $M^a$. It is clear that
\begin{prop}
$M^a$ is a cyclic $U(\mathfrak N^a)$-module with $v_{M^a}$ as a generator.
\end{prop}
\begin{rem}
The construction of the PBW-degeneration is non-trivial even if the initial algebra ${\mathfrak N}$ is abelian. 
In fact, the $U(\mathfrak N^a)$-module
$M^a$ is graded by non-negative integers and each non-trivial operator from ${\mathfrak N}$ has degree one. So if the initial ${\mathfrak N}$-module $M$ is not graded,
the modules $M^a$ and $M$ are not isomorphic.
\end{rem}

\subsection{Essential monomials}
In this section we follow the approach due to Vinberg (see \cite{V}, \cite{G}). Let $\bU, {\mathfrak N}, M$ and $v_M$ be as above.
We fix an ordered basis of ${\mathfrak N}\supset\{ f_1> \dots >f_N\}$ and let ``$>$" be an induced homogeneous monomial order 
(for example the homogeneous reverse lexicographic order,
the homogeneous lexicographic order, \ldots) on the monomials in $\{f_1,\dots,f_N\}$.
In other words, when we compare two monomials, we first compare their total degree.

To a collection of non-negative integers $p_i$, $i=1,\dots,N$, we attach a vector
\[
v_M(\bp)=\bff^\bp v_M=f_1^{p_1}\dots f_N^{p_N}v_M\in M.
\]
Here and below we denote a multi-exponent $(p_1,\dots,p_N)$ simply by $\bp$,
for example, $v_M=v_M({\mathbf 0})$. The degree of $\bff^\bp$ is denoted by $\vert\bp\vert=p_1+\ldots+p_N$.
The sum of multi-exponents $\bp+\bq=(p_1+q_1,\dots,p_N+q_N)$ is defined componentwise.
Since we have a monomial order, $\bp\ge \bq$ and $\bp'\ge\bq'$ implies $\bp+\bp'\ge \bq+\bq'$.
Here and below we write $\bp\ge \bq$ iff $\bff^\bp\ge \bff^\bq$.

\begin{dfn}
A pair $(M,\bp)$ is said to be {\it essential} if
$$
v_M(\bp)\notin \mathrm{span}\{v_M(\bq):\ \bq<\bp\}.
$$
\end{dfn}
If $(M,\bp)$ is essential, then we say that $\bp$ is an essential multi-exponent, $\bff^\bp$ is an essential
monomial in $M$ and we call the vector $v_M(\bp)$ an essential vector.
\begin{dfn}\label{essentialset}
We denote by ${\rm es}(M)\subset\bZ^N_{\ge 0}$ the set of essential
multi-exponents for the module $M$.
\end{dfn}
\begin{rem}\label{remessential}
Since the chosen order is homogeneous and the PBW filtration is given by the total degree,
if $(M,\bp)$ is essential, then $f^\bp v_M$ is non-zero in the PBW degenerate representation $M^a$.
\end{rem}

We use sometimes the notation $S[f_1,\ldots,f_N]$ for $U(\mathfrak N^a)$ if we want to emphasize
that we have fixed a vector space basis for ${\mathfrak N}$.
We define subspaces $F_{<\bp}(M)\subseteq F_\bp(M)\subseteq M$:
\[
F_{<\bp}(M)=\mathrm{span}\{v_M(\bq):\ \bq <\bp\},\quad F_\bp(M)=\mathrm{span}\{v_M(\bq):\ \bq\le\bp\}.
\]
These subspaces define an increasing filtration on $M$, which is finer than the PBW filtration:
\begin{equation}\label{compareFilt}
PF_{\vert\bp\vert -1}(M)\subseteq F_\bp(M) \subseteq PF_{\vert\bp\vert}(M).
\end{equation}
For $i=1,\ldots,N$ let $\mathbf{e}_i=(0,\ldots,0,1,0,\ldots,0)$, where the only nonzero entry is at the $i$-th place.
Then one has 
\[
f_i (F_\bp(M)/F_{<\bp}(M))\subseteq F_{\bp+\mathbf{e}_i}(M)/F_{<\bp+\mathbf{e}_i}(M).
\]
The inclusion in \eqref{compareFilt} implies
\begin{equation}\label{toricaction}
f_i (F_\bp(M)/PF_{\vert\bp\vert -1}(M))\subseteq F_{\bp+\mathbf{e}_i}(M)/PF_{\vert\bp\vert}(M).
\end{equation}
By construction we have $F_\bp(M)\subseteq F_\bq(M)$ if $\bp<\bq$.
We denote the associated graded space by $M^t$ ($t$ is for toric, this notation will be justified later).
The image of $v_M$ in $M^t$ is denoted by $v_{M^t}$.
The space $M^t$ is
$\bZ_{\ge 0}^N$-graded:
\[
M^t=\bigoplus_{\bp\in \bZ_{\ge 0}^N} M^t(\bp),\ \text{where\ } M^t(\bp)=F_\bp(M)/F_{<\bp}(M).
\]
\begin{prop}
\begin{enumerate}
\item The cyclic $U({\mathfrak N})$-module structure on $M$ induces the structure of a cyclic $U(\mathfrak N^a)$-module on $M^t$.
\item The annihilating ideal of $v_{M^t}\in M^t$ is a monomial ideal.
\item Given $\bp\in \bZ^N_{\ge 0}$, the homogeneous component $M^t(\bp)$ is at most one-dimen\-sional, and $\dim M^t(\bp)=1$ if and only if $(M,\bp)$ is essential.
\item The vectors $v_{M^t}(\bp)$, $\bp\in {\rm es}(M)$, form a basis of $M^t$.
\end{enumerate}
\end{prop}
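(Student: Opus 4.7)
The plan is to reduce everything to a single identity and then deduce (i)--(iv) formally. First, for every multi-index $\bq$ and every index $i$, I would establish
\[
f_i\cdot v_M(\bq) - v_M(\bq+e_i) \in F^-_{\bq+e_i}(M).
\]
To see this, I rewrite $f_i\cdot f_1^{q_1}\cdots f_N^{q_N}$ inside $\U({\mathfrak N})$ by pushing $f_i$ to its PBW position: each commutation $f_if_k = f_kf_i + [f_i,f_k]$ produces a commutator term of total degree one less. Expanding $[f_i,f_k]$ in the basis $f_1,\ldots,f_N$ and iterating, one writes $f_i\cdot f_1^{q_1}\cdots f_N^{q_N}$ as the desired ordered monomial $f_1^{q_1}\cdots f_i^{q_i+1}\cdots f_N^{q_N}$ plus a linear combination of ordered monomials of total degree at most $|\bq|$. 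Applied to $v_M$, those error terms have strictly smaller total degree than $|\bq+e_i|$, so by homogeneity of the order they lie in $F^-_{\bq+e_i}(M)$.

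From this identity parts (i) and (ii) follow formally. The identity, combined with translation-invariance of any monomial order, shows that $f_i$ sends $F_\bp(M)$ into $F_{\bp+e_i}(M)$ and $F^-_\bp(M)$ into $F^-_{\bp+e_i}(M)$, so $f_i$ induces a well-defined operator $M^t(\bp)\to M^t(\bp+e_i)$. Commutativity of these induced operators is immediate from $f_if_j - f_jf_i = [f_i,f_j]$: the right-hand side applied to $v_M(\bq)$ is a combination of vectors $f_kv_M(\bq)\in F_{\bq+e_k}(M)$, each sitting in a degree strictly lower than $\bq+e_i+e_j$, hence vanishing in the graded component $M^t(\bq+e_i+e_j)$. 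Thus $M^t$ carries the structure of a $\bC[f_1,\ldots,f_N]$-module. Iterating the identity also gives $\bff^\bp\cdot \ov{v_M} = \ov{v_M(\bp)}$ in $M^t$, so every homogeneous component is reached from $\ov{v_M}$ and $M^t$ is cyclic with generator $\ov{v_M}$. For (ii), the annihilator of $\ov{v_M}$ in $\bC[f_1,\ldots,f_N]$ is $\bZ_{\ge 0}^N$-graded (because the action respects the grading), hence spanned by monomials; and $\bff^\bp$ annihilates $\ov{v_M}$ iff $\ov{v_M(\bp)}=0$ in $M^t(\bp)$, iff $\bp\notin{\rm es}(M)$, so the annihilator is the monomial ideal generated by $\{\bff^\bp:\bp\notin{\rm es}(M)\}$.

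Parts (iii) and (iv) are then immediate from the definition of the grading. By construction $M^t(\bp) = F_\bp(M)/F^-_\bp(M)$ is spanned by the single class $\ov{v_M(\bp)}$, hence is at most one-dimensional, and is non-zero precisely when $v_M(\bp)\notin F^-_\bp(M)$, i.e. when $\bp\in{\rm es}(M)$. Since $M^t=\bigoplus_\bp M^t(\bp)$, assembling these lines over $\bp\in{\rm es}(M)$ yields part (iv) at once. I do not foresee any conceptual obstacle; the only step requiring genuine attention is the key identity in the first paragraph, where one must carefully track the total-degree reduction through iterated PBW reorderings and then invoke the homogeneity of the chosen order to conclude that strictly lower total degree implies strictly lower position in the monomial order.
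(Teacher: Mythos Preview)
Your proof is correct and rests on the same underlying mechanism as the paper's---the homogeneity of the monomial order together with the fact that PBW-reordering produces terms of strictly lower total degree---but you spell out part (i) in considerably more detail than the paper, which dispatches it in one line via Remark~\ref{remessential} and then argues (ii)--(iv) by a dimension count using that $\{v_M(\bp):\bp\in{\rm es}(M)\}$ is already a basis of $M$. Your explicit identity $f_i\,v_M(\bq)\equiv v_M(\bq+e_i)\pmod{F^-_{\bq+e_i}(M)}$ is precisely what makes the terse reference in the paper work, and deducing (iii)--(iv) directly from the one-dimensionality of the graded pieces (rather than via a global dimension comparison) is a clean variant of the same argument.
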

\begin{proof}
Part {(i)} follows by \eqref{toricaction}. The essential vectors $\{v_M(\bq)=\bff^\bp v_M\mid \bp\in {\rm es}(M)\}$ form a basis of $M$
by construction.
Since $\bff^\bq v_{M^t}=v_{M^t}(\bq)=0$ in $M^t$ for $\bq$ {\bf not} essential, it follows by dimension reason
that $\{v_{M^t}(\bq)=\bff^\bp v_M\mid \bp\in {\rm es}(M)\}$ is a basis of $M^t$ and the
$\{\bff^\bp \mid \bp\not\in {\rm es}(M)\}$ form a basis of the annihilating ideal.

Finally, since any two multi-exponents are comparable, the dimension of $M^t(\bp)$ is at most one,
and it is one if and only if $(M,\bp)$ is essential.
\end{proof}

\begin{rem}
Each operator $f_i$ on $M^t$ is homogeneous with respect to the $\bZ^N$-grading and has
degree $\mathbf{e}_i$.
\end{rem}

The next corollary just summarizes the nice behaviour of the essential vectors with respect to the filtrations:
\begin{cor}\label{essbasis}
The vectors $\{v_{M}(\bp)\mid \bp\in {\rm es}(M)\}\subset M$ as well as
$\{v_{M^a}(\bp)\mid \bp\in {\rm es}(M)\}\subset M^a$, %respectively 
$\{v_{M^t}(\bp)\mid \bp\in {\rm es}(M)\}\subset M^t$ form a basis of the corresponding space.
\end{cor}

\begin{rem}
If ${\mathfrak N}$ is abelian, after fixing a basis $\{f_1,\ldots,f_N\}$ we can identify $U({\mathfrak N})$ with the
symmetric algebra $S[f_1,\ldots,f_N]$, and a cyclic module $M$ is of the form  $S[f_1,\ldots,f_N]/I$,
where $I$ is the annihilating ideal. The general procedure described above gives a degeneration of the ideal  $I$ to
a monomial ideal.
\end{rem}
\begin{rem}
Since the filtration induced by a homogeneous order is a refinement of the PBW filtration, it is easy to see that
even if ${\mathfrak N}$ is not abelian, then
$(M^a)^t$ is isomorphic to $M^t$ as ${\mathfrak N}^a$-modules.
\end{rem}

For two cyclic $\U({\mathfrak N})$-modules $M_1$ and $M_2$ with cyclic generators $v_{M_i}\in M_i$, $i=1,2$,  
we denote by $M_1\odot M_2$ the Cartan component in $M_1\T M_2$, i.e.
\[
M_1\odot M_2=\U({\mathfrak N})(v_{M_1}\T v_{M_2})\subset M_1\T M_2.
\]
\begin{prop}\label{plus}
If $(M_1, \bp)$ and $(M_2, \bq)$ are essential, then $(M_1 \odot M_2, \bp+\bq)$ is essential as well.
\end{prop}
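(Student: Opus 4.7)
The plan is to work inside $M_1 \otimes M_2$, to fix the distinguished basis element $\bff^\bp v_{M_1} \otimes \bff^\bq v_{M_2}$ (which is well-defined by Corollary~\ref{essbasis}), and to show by comparing coefficients that $\bff^{\bp+\bq}(v_{M_1}\otimes v_{M_2})$ cannot lie in $\mathrm{span}\{\bff^\br(v_{M_1}\otimes v_{M_2}) : \br<\bp+\bq\}$.

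First I would expand the action on the tensor product via the coproduct. Since $\Delta(f)=f\otimes 1+1\otimes f$ and $f_i\otimes 1$ commutes with $1\otimes f_i$ in $\U(\mathfrak N)\otimes\U(\mathfrak N)$, a direct computation gives
$$\bff^\br(v_{M_1}\otimes v_{M_2})=\sum_{\mathbf{k}\le\br}\binom{\br}{\mathbf{k}}\bff^{\mathbf{k}}v_{M_1}\otimes \bff^{\br-\mathbf{k}}v_{M_2},$$
where $\mathbf{k}\le\br$ is meant componentwise and $\binom{\br}{\mathbf{k}}=\prod_i\binom{r_i}{k_i}$. I would then rewrite each $\bff^{\mathbf{k}}v_{M_1}$ and each $\bff^{\br-\mathbf{k}}v_{M_2}$ in the essential bases of $M_1$ and $M_2$; by construction of the filtration these expansions involve only essential multi-exponents $\le\mathbf{k}$ (resp.\ $\le\br-\mathbf{k}$) in the monomial order, and the coefficient of $\bff^{\mathbf{k}}v_{M_1}$ on itself equals $1$ when $\mathbf{k}$ is essential. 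After this substitution, the coefficient of $\bff^\bp v_{M_1}\otimes \bff^\bq v_{M_2}$ in $\bff^\br(v_{M_1}\otimes v_{M_2})$ becomes a sum over $\mathbf{k}$ subject to $\mathbf{k}\le\br$ componentwise together with $\bp\le\mathbf{k}$ and $\bq\le\br-\mathbf{k}$ in the monomial order.

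The key step is then the additivity of the monomial order (recalled in the paragraph just before the proposition): $\bp\le\mathbf{k}$ and $\bq\le\br-\mathbf{k}$ together imply $\bp+\bq\le\br$. Hence for $\br<\bp+\bq$ there is no admissible $\mathbf{k}$ and the coefficient in question vanishes, while for $\br=\bp+\bq$ the total order forces $\mathbf{k}=\bp$ uniquely; the componentwise bound $\bp\le\bp+\bq$ is automatic, and essentiality of $(M_1,\bp)$ and $(M_2,\bq)$ makes the associated expansion coefficients equal to $1$, so the resulting coefficient is $\binom{\bp+\bq}{\bp}\neq 0$. A putative relation $\bff^{\bp+\bq}(v_{M_1}\otimes v_{M_2})=\sum_{\br<\bp+\bq}c_\br\bff^\br(v_{M_1}\otimes v_{M_2})$ would then give $\binom{\bp+\bq}{\bp}=0$ after extracting the coefficient of $\bff^\bp v_{M_1}\otimes \bff^\bq v_{M_2}$, a contradiction.

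The one thing to watch is the interplay between two different orders on $\bN^N$: the componentwise partial order, which governs the nonvanishing of binomial coefficients in the coproduct expansion, and the total monomial order, which governs essentiality and the filtration. The additivity of the monomial order is the bridge between them and is precisely what ensures that $\bff^\bp v_{M_1}\otimes \bff^\bq v_{M_2}$ survives as a "leading term" of $\bff^{\bp+\bq}(v_{M_1}\otimes v_{M_2})$.
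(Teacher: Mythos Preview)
Your argument is correct and follows essentially the same route as the paper's proof: both work inside $M_1\otimes M_2$, single out the term $\bff^\bp v_{M_1}\otimes \bff^\bq v_{M_2}$, and use additivity of the monomial order to show that no $\bff^\br(v_{M_1}\otimes v_{M_2})$ with $\br<\bp+\bq$ can contribute to it while $\bff^{\bp+\bq}(v_{M_1}\otimes v_{M_2})$ contributes with coefficient $\binom{\bp+\bq}{\bp}$. The only presentational difference is that the paper phrases this as a subspace containment (everything for $\br<\bp+\bq$ lands in $M_1\otimes F_\bq(M_2)^- + F_\bp(M_1)^-\otimes M_2$) and invokes homogeneity of the order to dispose of commutator terms, whereas you write down the exact coproduct identity $\Delta(\bff^\br)=\sum_{\mathbf k}\binom{\br}{\mathbf k}\bff^{\mathbf k}\otimes\bff^{\br-\mathbf k}$, which makes the homogeneity remark unnecessary at that step.
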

\begin{proof}
We denote $v_{M_1}$ by $v_1$, $v_{M_2}$ by $v_2$ and $v_{M_1\odot M_2}$ by $v_{12}$ (we have $v_{12}=v_1\T v_2$).
Similarly, we set $v_1(\bp)=v_{M_1}(\bp)$, $v_2(\bp)=v_{M_2}(\bp)$, $v_{12}(\bp)=v_{M_1\odot M_2}(\bp)$.
We have to show that
\[
f_1^{p_1+q_1}\dots f_N^{p_N+q_N} v_{12}\notin\mathrm{span}\{v_{12}(\br),\ \br<\bp+\bq\}.
\]
In fact, note that if $\br<\bp+\bq$, then
\begin{equation}\label{less}
\bff^\br(v_1\T v_2)\in M_1\T\mathrm{span}\{v_2(\bq'):\ \bq'<\bq\} + \mathrm{span}\{v_1(\bp'):\ \bp'<\bp\}\T M_2
\end{equation}
(acting by a monomial in the $f_i$'s on the tensor product $v_1\T v_2$ means we simply distribute the factors among
$v_1$ and $v_2$). However, $v_{12}(\bp+\bq)$ does not belong to the right hand side of \eqref{less}.
To prove this, it suffices to show that
\begin{equation}\label{sum1}
f_1^{p_1+q_1}\dots f_N^{p_N+q_N} (v_1\T v_2) = C\cdot f_1^{p_1}\dots f_N^{p_N} v_1\T f_1^{q_1}\dots f_N^{q_N}v_2 + \text{ rest },
\end{equation}
where $C$ is a non-zero constant and the remaining terms {\it rest} belong to
\begin{equation}\label{rest}
M_1\T\mathrm{span}\{v_2(\bq'):\ \bq'<\bq\} + \mathrm{span}\{v_1(\bp'):\ \bp'<\bp\}\T M_2.
\end{equation}
Recall that $f_i$ acts as $f_i\T 1 + 1\T f_i$.
The left hand side of \eqref{sum1} is a sum of many terms, among which there are (possibly) several of the
following form:
\[
f_1^{p_1}\dots f_N^{p_N} v_1\T f_1^{q_1}\dots f_N^{q_N}v_2.
\]
Note that while distributing $f_i$ as $f_i\T 1$ and $1\T f_i$ we do not care about the order because we
assume the order on the monomials to be homogeneous -- and hence we can assume that all $f_i$'s commute
because the additional terms coming up during the reordering process are automatically elements of \eqref{rest}.
Now consider the terms
\[
f_1^{p'_1}\dots f_N^{p'_N} v_1\T f_1^{q'_1}\dots f_N^{q'_N}v_2,
\]
where some  $p'_i$ differ from $p_i$. Then we have necessarily
\[
\text{ either } (p_1,\dots,p_N) > (p'_1,\dots,p'_N) \text{ or } (q_1,\dots,q_N) > (q'_1,\dots,q'_N),
\]
since $\bp+\bq=\bp'+\bq'$ (i.e. $p_i+q_i=p'_i+q'_i$ for $i=1,\ldots,N$).
\end{proof}
The proposition above implies that the set $\Gamma_{\bU}(M):=\bigcup_{n\ge 1}(n,\text{es}(M^{\odot n}))\subset \bZ\times\bZ^N$
is naturally endowed with the structure of a semigroup.
\begin{definition}
The semigroup $\Gamma_{\bU}(M)$ is called the {\it essential semigroup} of $M$.
\end{definition}

\section{Coordinate rings and flag varieties}\label{cr}
\subsection{The unipotent case}\label{ringandrep}
Let $\bU$ be a complex algebraic unipotent group acting on a cyclic finite dimensional complex vector space $M$,
so for the nilpotent Lie algebra ${\mathfrak N}=\text{Lie\,}\bU$  and a cyclic vector $v_M$ we have $M=\U({\mathfrak N})v_M$.
We define the ${\bU}$-flag variety $\Fl_{\bU}(M)$ in $ \bP(M)$ as
the closure  of the $\bU$-orbit through the line $\bC v_M$ inside the projective space $\bP(M)$:
\begin{equation}\label{highorbit}
\Fl_{\bU}(M)=\overline{\bU\cdot \bC v_M}\subseteq \bP(M).
\end{equation}
\begin{example}\label{exG}
Let $M=V(\la)$ be a finite dimensional highest weight representation of a simple Lie algebra $\g$ and
let $\g=\fn^-\oplus\fh \oplus \fn^+$ be the Cartan decomposition.
Let $\bU=U^-$ be the maximal unipotent subgroup with Lie algebra $\fn^-$ of the
corresponding Lie group $G$, then the orbit closure $\Fl_{U^-}(M)$ is the (possibly partial) flag variety $G/P_\la$, where $P_\la$
is the parabolic subgroup stabilizing the highest weight line. 
\end{example}

By Proposition~\ref{repdescription} we have the following representation theoretic description of the
homogeneous coordinate ring of the embedded variety $\Fl_{\bU}(M)\subset \bP(M)$:
$$
\mathbb C[\Fl_{\bU}(M)]=
\bigoplus_{\ell\ge 0} \mathbb C[\Fl_{\bU}(M)]_\ell 
=\bigoplus_{\ell\ge 0} (M^{\odot \ell})^*=\bigoplus_{n\ge 0} R_n(M)=R(M).
$$
\subsection{Abelianized version}
Instead of starting with $\bU, {\mathfrak N}$ and $M$ in Section~\ref{ringandrep} we can start
with the abelian Lie algebra ${\mathfrak N}^a$, the module $M^a$ and
$\bU^a=\exp({\mathfrak N}^a)\subset \text{GL}(M^a)$, the Lie group associated to ${\mathfrak N}^a$.
We call the orbit closure
$$
\Fl_{\bU^a}(M^a)=\overline{\bU^a\cdot\bC v_M}\subseteq \bP(M^a)
$$
the PBW-degeneration of $\Fl_{\bU}(M)$.
We have the following representation theoretic description of the
homogeneous coordinate ring of the embedded variety: 
$$
\mathbb C[\Fl_{\bU^a}(M^a)]=R(M^a).
$$
\subsection{Toric version}\label{degenerations}
Instead of starting with $\bU, {\mathfrak N}$ and $M$ in Section~\ref{ringandrep} we can start
with $\bU^a, {\mathfrak N}^a$ and the module $M^t$. We call the orbit closure
$$
\Fl_{\bU^a}(M^t)=\overline{\bU^a\cdot\bC v_M}\subseteq \bP(M^t)
$$
the toric degeneration of $\Fl_{\bU}(M)$. Again 
we have the following representation theoretic description of the
homogeneous coordinate ring of the embedded variety:
$$
\mathbb C[\Fl_{\bU^a}(M^t)]=R(M^t).
$$
We are interested to find conditions which ensure that the abelian respectively toric 
degeneration of $\Fl_{\bU}(M)$ are obtained by a flat degeneration. A first step 
is to describe the structure of $\Fl_{\bU^a}(M^t)$ as a toric variety.

\section{The structure of \texorpdfstring{$\Fl_{\bU^a}(M^t)$}{the closure of the highest weight orbit} as a toric variety}\label{toricsection}
\subsection{Polytopes}\label{polytopes}
A {\it convex lattice polytope} is a polytope $P$ in a Euclidean space ${\mathbb R}^m$
which is the convex hull of finitely many points in the integer lattice ${\mathbb Z}^m \subset {\mathbb R}^m$.
A convex lattice polytope $P$ is called {\it normal} if it has the following property: given any positive integer $n$,
every lattice point of the dilation $nP$, obtained from P by scaling its vertices by the factor n and taking the
convex hull of the resulting points, can be written as the sum of exactly $n$ lattice points in P. Another way
of formulating this property is: the set of lattice points in $nP$ is the $n$-fold Minkowski sum of the lattice points
in $P$ (recall that the Minkowski sum of two subsets $A+B$ is the set of all possible sums $a+b$,
$a\in A$, $b\in B$).

\subsection{Toric varieties}\label{toricvariety}
Let us fix some notation (see \cite{CLS}, Section 2). Let $S\subset\bZ^N$ be a finite
set, $S=(\bs^1,\dots,\bs^k)$. For $\bz=(z_1,\dots,z_N)\in\bC^N$ we set $\bz^\bs=\prod_{i=1}^N z_i^{s_i}$.
{The variety $X(S)\subset\bP^{k-1}$ is defined as the closure of the set
\[
\{(\bz^{\bs^1}:\bz^{\bs^2}:\dots:\bz^{\bs^k})\mid \bz\in(\bC^*)^N\}\subset \bP^{k-1}.
\]
The variety $X(S)$ is a toric variety, it admits a dense orbit by the torus $T=(\bC^*)^N$,
which acts by scaling the variables $z_i$.}

In general the homogeneous coordinate ring of $X(S)$ is the semigroup algebra of
the graded semigroup in $\bN\times\bZ^N$ generated by $\{1\}\times S$. 
Now assume that $S$ is the set of lattice points inside a normal polytope $P$.
Let us consider the polyhedral cone $C(S)$ consisting of elements of  the
form $(n,\bs)$, $\bs\in nP$. The set of lattice points in $C(S)$ is equal to the set $(n,\bs)$, $\bs\in nS$.
Clearly, this set forms a semigroup. We denote by $R(S)$ the complex group algebra of this semigroup. We have $R(S)=\bigoplus_{n\ge 0} R_n(S)$
and the dimension of $R_n(S)$ is given by the cardinality of $nS$.
The ring $R(S)$ is the homogeneous coordinate ring of the projective variety $X(S)$.

\subsection{The toric degeneration \texorpdfstring{$\Fl_{\bU^a}(M^t)$}{} of  \texorpdfstring{$\Fl_{\bU}(M)$}{the flag variety}}
We make the same assumptions and we use the same notation as in Section~\ref{cr}.
We use the notation $\bG_a$ for the one dimensional algebraic group $(\mathbb C,+)$. Recall that 
a commutative unipotent group is isomorphic to a product of several copies of  $\bG_a$.
\begin{prop}\label{prop-both}
The variety $\Fl_{\bU^a}(M^t)$ is both a $\bG_a^N$-variety and a toric variety.
The toric variety $\Fl_{\bU^a}(M^t)$ is isomorphic to $X({\rm es}(M))$.
\end{prop}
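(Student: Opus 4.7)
My plan is to compute the $\bU^a$-orbit of $[v_M]$ explicitly and match it with the monomial parametrization defining $X(\text{es}(M))$. Since $\mathfrak N^a$ is abelian and $\bU^a=\exp(\mathfrak N^a)\cong\bG_a^N$, for $c=(c_1,\dots,c_N)\in\bC^N$ one has in $M^t$
\[
\exp\Bigl(\textstyle\sum_{i=1}^N c_i f_i\Bigr)\cdot v_M
= \sum_{\bp\in\bZ_{\ge 0}^N}\frac{c_1^{p_1}\cdots c_N^{p_N}}{p_1!\cdots p_N!}\,\bff^\bp v_M
= \sum_{\bp\in\text{es}(M)}\frac{c^\bp}{\bp!}\,v_M(\bp),
\]
since $\bff^\bp v_M=0$ in $M^t$ for $\bp\notin\text{es}(M)$ by Corollary~\ref{essbasis}. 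Using $\{v_M(\bp)\}_{\bp\in\text{es}(M)}$ as a basis of $M^t$ and absorbing the factorials into a rescaling of this basis, the $\bU^a$-orbit of $[v_M]$ is the image of
\[
\phi:\bC^N\to\bP(M^t),\qquad c\mapsto\bigl[(c^\bp)_{\bp\in\text{es}(M)}\bigr].
\]

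Next I would exhibit the torus action making the orbit closure toric. Since $M^t$ is $\bZ^N$-graded (each $f_i$ shifts the grading by the $i$-th basis vector), the torus $T=(\bC^*)^N$ acts on $\bP(M^t)$ by scaling $v_M(\bp)$ by $\bz^\bp$, and $[v_M]$ is $T$-fixed as it lies in the zero-graded component. Conjugation by $\bz\in T$ rescales each $f_i$ by $z_i$, giving $\bz\cdot\exp(\sum c_if_i)\cdot\bz^{-1}=\exp(\sum z_ic_if_i)$. Hence $T$ normalizes $\bU^a$ and preserves the orbit $\bU^a\cdot[v_M]$ by reparametrization, and therefore also its closure $\Fl_{\mathfrak N^a}(M^t)$. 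This proves the first claim: the variety is simultaneously a $\bG_a^N$-variety and a $T$-variety.

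For the identification with $X(\text{es}(M))$: restricting $\phi$ to the dense subset $(\bC^*)^N\subset\bC^N$ gives, after the factorial rescaling (a diagonal automorphism of $\bP(M^t)$ commuting with $T$), exactly the monomial parametrization $\bz\mapsto[(\bz^\bp)_{\bp\in\text{es}(M)}]$ defining $X(\text{es}(M))$ in \S\ref{toricvariety}. Taking closures,
\[
\Fl_{\mathfrak N^a}(M^t)=\overline{\phi(\bC^N)}=\overline{\phi((\bC^*)^N)}=X(\text{es}(M)),
\]
and the right-hand side is the closure of a single dense $T$-orbit, confirming toricity in the proper sense. I expect no serious obstacle: the argument rests on the explicit basis of $M^t$ from Corollary~\ref{essbasis} and commutativity of $\mathfrak N^a$, both already in hand; the only real care needed is verifying that the factorial change of basis is a $T$-equivariant automorphism, which is a routine diagonal calculation.
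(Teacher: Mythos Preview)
Your proposal is correct and follows essentially the same approach as the paper: both compute the exponential $\exp(\sum c_if_i)v_M$ in $M^t$ using the essential basis, rescale by the factorials $\bp!$, define the torus action via the $\bZ^N$-grading, and identify the orbit closure with the monomial parametrization of $X(\text{es}(M))$. Your argument is slightly more explicit about $T$ normalizing $\bU^a$ and about the equivariance of the rescaling, but the underlying ideas coincide.
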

\begin{proof}
We need to prove that there exists a torus acting on $\Fl_{\bU^a}(M^t)$ with an open orbit. Consider the $N$-dimensional torus
$T=(\bC^*)^N$ acting on $\bC[f_1,\dots,f_N]$ by
\[
(t_1,\dots,t_N)\cdot f_1^{p_1}\dots f_N^{p_N}=\big(\prod_{i=1}^N t_i^{p_i}\big)\bff^\bp.
\]
Since $M^t=\bC[f_1,\dots,f_N]/I$ for some monomial ideal $I$, we obtain a $T$-action on $M^t$ and hence on $\bP(M^t)$.
We are left to show that $T$ acts on $\bU�\cdot v_M=\bG_a^N\cdot\bC v_M$ with an open orbit.
Let ${\rm es}(M)\subset \bZ_{\ge 0}^N$ be the set of essential multi-exponents for $M$. Then in $M^t$ one has
\[
\exp(\sum_{i=1}^N a_if_i) v_{M^t}=\sum_{\bp\in {\rm es}(M)} \frac{1}{\prod_{i=1}^N p_i!}a_1^{p_1}\dots a_N^{p_N} v_{M^t}(\bp),\ a_i\in\bC.
\]
Since $(t_1,\dots,t_N)\cdot v_{M^t}(\bp)=\prod_{i=1}^N t_i^{p_i} v_{M^t}(\bp)$, we obtain
\[
(t_1,\dots,t_N)\cdot \exp(\sum_{i=1}^N a_if_i) v_{M^t} = \exp(\sum_{i=1}^N a_it_if_i) v_{M^t}.
\]
{ Fix the basis $\{ m_\bp=\frac{1}{\prod_{i=1}^N p_i!} v_{M^t}(\bp)\mid \bp\in \text{es}(M)\}$ of $M^t$ and
let $x_0\in\bP(M)$ be the point $x_0=[\exp(\sum_{i=1}^N f_i) v_{M^t}]$. Now $\{ \exp(\sum_{i=1}^N z_i f_i)\mid z_i\in\bC^*\}$
is an open and dense subset of $\mathbb U^a\simeq \bG_a^N$, and $x_0\in \mathbb U^a [v_{M^t}(\bp)]$. It follows that
$$
\begin{array}{rcl}
\Fl_{\bU^a}(M^t)&=& \overline{\mathbb U^a.[v_{M^t}]}\\
&=&\overline{\{\exp(\sum_{i=1}^N z_if_i).[v_{M^t}]\mid z_1,\ldots,z_N\in\bC^*\}}\\
&=&\overline{\{(z^{\bp^1}:z^{\bp^2}:\ldots:z^{\bp^N})\mid z=( z_1,\ldots,z_N)\in(\bC^*)^N\}},
\end{array}
$$
where $\text{es}(M)=\{\bp^1,\ldots\bp^N\}$, which proves the proposition.}
\end{proof}

\section{A basis of the homogeneous coordinate ring of \texorpdfstring{$\Fl_{\bU}(M)$}{the flag variety}}\label{hcr1}
Let $\bU$ be a complex algebraic unipotent group with Lie algebra ${\mathfrak N}=\text{Lie\,}\bU$.
Let $\bU$ act on a cyclic finite dimensional complex vector space $M$ with cyclic vector $v_M$.
We consider the basis $\{v_M(\bp) \mid \bp \in es(M) \}$ of $M$ and denote the elements of the dual basis in $M^*$ by $\{\xi_\bp \mid \bp \in es(M) \}$.
\begin{lem}\label{<}
Let $\bq=(q_i)_{i=1}^N$ be a multi-exponent (not necessarily essential). Then for any essential $\bp$ such that $\bq<\bp$ we have
$\xi_\bp(v_M(\bq))=0$.
\end{lem}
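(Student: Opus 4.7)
The strategy is to show that $v_M(\bq)$ already lies in the linear span of those essential basis vectors $v_M(\bp')$ with $\bp' \leq \bq$. Once this is established, the conclusion is immediate: the hypothesis $\bp > \bq$ combined with totality of the monomial order gives $\bp \neq \bp'$ for every essential $\bp' \leq \bq$, so $\xi_\bp$ annihilates every basis vector that appears in the expansion of $v_M(\bq)$.

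\textbf{Key step.} I would prove by induction on $\bq$, with respect to the monomial order (which well-orders $\bZ_{\geq 0}^N$), that
\[
F_\bq(M) \;=\; \mathrm{span}\{ v_M(\bp') \mid \bp' \in \mathrm{es}(M),\ \bp' \leq \bq \}.
\]
The inclusion ``$\supseteq$'' is immediate from the definition of $F_\bq(M)$. For ``$\subseteq$'', it suffices to show that each generator $v_M(\bq')$ with $\bq' \leq \bq$ of $F_\bq(M)$ lies in the right-hand side. If $\bq'$ is essential, there is nothing to prove. Otherwise, the definition of ``essential'' and Corollary~\ref{essbasis} (which guarantees that $\{v_M(\bp') : \bp' \in \mathrm{es}(M)\}$ is a basis of $M$) together imply
\[
v_M(\bq') \;\in\; F_{\bq'}(M)^- \;=\; \mathrm{span}\{v_M(\bq'') : \bq'' < \bq'\},
\]
and the induction hypothesis, applied to each $\bq'' < \bq' \leq \bq$, rewrites $v_M(\bq'')$ as a linear combination of essential $v_M(\bp')$ with $\bp' \leq \bq'' < \bq$.

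\textbf{Conclusion.} With the displayed equality in hand, write $v_M(\bq) = \sum c_{\bp'} v_M(\bp')$ as a linear combination indexed by essential multi-exponents $\bp' \leq \bq$. Since $\bp > \bq$ forces $\bp \neq \bp'$ for every such $\bp'$ (here we use that a monomial order is a total order on $\bZ_{\geq 0}^N$, so $\bp' \leq \bq < \bp$ precludes $\bp' = \bp$), we obtain $\xi_\bp(v_M(\bq)) = \sum c_{\bp'} \xi_\bp(v_M(\bp')) = 0$, as required.

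\textbf{Main obstacle.} There is essentially no obstacle; the only point that warrants care is the induction establishing the displayed identity, which rests on two facts already in place: the monomial order is a well-order on $\bZ_{\geq 0}^N$ (so induction is legitimate), and non-essentiality of $\bq'$ means precisely that $v_M(\bq')$ is a linear combination of strictly smaller $v_M(\bq'')$. Everything else is a one-line consequence.
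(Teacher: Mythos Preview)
Your proof is correct and follows exactly the same idea as the paper's one-line argument: the paper simply asserts that $v_M(\bq)$ is a linear combination of essential $v_M(\bq')$ with $\bq'\le\bq<\bp$, while you supply the inductive justification for that assertion. Nothing is missing and nothing is different in substance.
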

\begin{proof}
{
The vector $v_M(\bq)$ can be expressed as a linear combination of vectors $v_M(\bq')$ with $\bq'$ essential
and $\bq'\le \bq<\bp$, which proves
the lemma.}
\end{proof}

Recall the description of the homogeneous coordinate ring 
$$
\mathbb C[\Fl_{\bU}(M)]=R(M)=\bigoplus_{n\ge 0} (M^{\odot n})^*.
$$ 
Consider the structure constants $c_{\bp,\bq}^\br$, defined by
\[
\xi_\bp\xi_\bq=\sum_{\br\in {\rm es}(M\odot M)} c_{\bp,\bq}^\br\xi_\br,\quad \bp,\bq\in {\rm es}(M).
\]

\begin{cor}\label{sc}
The structure constant $c_{\bp,\bq}^{\br}$ vanishes if $\br<\bp+\bq$, but $c_{\bp,\bq}^{\bp+\bq}$ does not vanish.
\end{cor}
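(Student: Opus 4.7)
My plan is to unpack the definition of $c^\br_{\bp,\bq}$ via the inclusion $M^{\odot 2}\subset M\otimes M$ and then apply Proposition~\ref{plus} together with Lemma~\ref{<}. Concretely, since $R_n(M)=(M^{\odot n})^*$ and the multiplication $R_1\otimes R_1\to R_2$ is dual to the inclusion $M^{\odot 2}\hookrightarrow M\otimes M$, one has
\[
c^\br_{\bp,\bq}=(\xi_\bp\otimes\xi_\bq)\bigl(\bff^\br(v_M\otimes v_M)\bigr)
\]
for every $\br\in\text{es}(M\odot M)$ (and such $\br=\bp+\bq$ indeed lies in $\text{es}(M\odot M)$ by Proposition~\ref{plus}).

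The next step is to expand $\bff^\br(v_M\otimes v_M)$ by distributing each $f_i$ as $f_i\otimes 1+1\otimes f_i$ on the cyclic vector, exactly as in the proof of Proposition~\ref{plus}. Up to reordering terms whose correction summands, by homogeneity of the order, automatically lie in
\[
M\otimes\mathrm{span}\{v_M(\bq'):\bq'<\bq\}\;+\;\mathrm{span}\{v_M(\bp'):\bp'<\bp\}\otimes M,
\]
one gets a sum over pairs $(\bp',\bq')$ with $\bp'+\bq'=\br$ of scalar multiples of $v_M(\bp')\otimes v_M(\bq')$, with a strictly positive multinomial coefficient in front of the diagonal term.

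For the vanishing statement, I would argue: if $\br<\bp+\bq$, then for any decomposition $\bp'+\bq'=\br$ one cannot have both $\bp'\ge\bp$ and $\bq'\ge\bq$, so either $\bp'<\bp$ or $\bq'<\bq$; in either case Lemma~\ref{<} forces $\xi_\bp(v_M(\bp'))\xi_\bq(v_M(\bq'))=0$, and the correction terms are killed for the same reason. Hence $c^\br_{\bp,\bq}=0$.

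For the non-vanishing of $c^{\bp+\bq}_{\bp,\bq}$, I would observe that the decomposition $\bp'+\bq'=\bp+\bq$ with $(\bp',\bq')\neq(\bp,\bq)$ forces one coordinate strictly smaller and the other strictly larger, so the same Lemma~\ref{<} kills every off-diagonal contribution; only the diagonal term $C\cdot v_M(\bp)\otimes v_M(\bq)$ with $C\neq 0$ survives, pairing to $C$ under $\xi_\bp\otimes\xi_\bq$. I expect no real obstacle here: the whole argument is essentially bookkeeping on top of Proposition~\ref{plus} and Lemma~\ref{<}; the only subtlety is making sure the reordering corrections are correctly absorbed into the ``lower'' subspace where $\xi_\bp\otimes\xi_\bq$ already vanishes, which is immediate from homogeneity of the chosen monomial order.
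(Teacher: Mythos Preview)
Your proof is correct and follows the same route as the paper: write $c^\br_{\bp,\bq}=(\xi_\bp\otimes\xi_\bq)\bigl(\bff^\br(v_M\otimes v_M)\bigr)$, expand via the coproduct into a sum over decompositions $\br'+\br''=\br$, and use Lemma~\ref{<} to kill every contribution except the diagonal one when $\br=\bp+\bq$. One small simplification worth noting: since $\bff^\br=f_1^{r_1}\cdots f_N^{r_N}$ is already in PBW order and $f_i\otimes 1$ commutes with $1\otimes f_j$, the expansion is \emph{exactly} $\sum_{\br'+\br''=\br}\prod_i\binom{r_i}{r'_i}\,v_M(\br')\otimes v_M(\br'')$ with no reordering corrections at all, so that part of your discussion is unnecessary (though harmless).
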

\begin{proof}
We have
\[
c_{\bp,\bq}^{\br}=(\xi_{\bp}\T\xi_\bq)(\bff^\br(v_M\T v_M))=\sum_{\br'+\br''=\br} 
d_{\br',\br''}\xi_\bp(v_M(\br'))\xi_\bq(v_M(\br'')),
\]
where $d_{\br',\br''}$ are some nonvanishing constants (multiplicities of the corresponding terms).
Now if $\br<\bp+\bq$, then either $\br'<\bp$ or $\br''<\bq$ and by Lemma \ref{<}
we are done. If $\br=\bp+\bq$ and $\br'\ne \bp$, then again either $\br'<\bp$ and $\xi_\bp(v_M(\br'))=0$, or $\br'>\bp$
and then $\br''<\bq=\br-\bp$, so $\xi_\bp(v_M(\br''))=0$. Hence only the terms with $\br'=\bp$, $\br''=\bq$
contribute to $c_{\bp,\bq}^{\bp+\bq}$.
\end{proof}

\begin{lem}\label{renorm}
We can renormalize $\xi_\bp$ in such a way that $c_{\bp,\bq}^{\bp+\bq}=1$ for all essential $\bp$ and $\bq$.
\end{lem}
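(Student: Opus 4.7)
The renormalization $\tilde\xi_\bp = \lambda_\bp \xi_\bp$ transforms the structure constants so that $\tilde c_{\bp,\bq}^{\bp+\bq} = \lambda_\bp \lambda_\bq \lambda_{\bp+\bq}^{-1} c_{\bp,\bq}^{\bp+\bq}$. So the task reduces to exhibiting a function $\lambda: \Gamma_{\mathfrak N}(M) \to \bC^*$ on the essential semigroup satisfying
$$\lambda_{\bp+\bq} = \lambda_\bp \, \lambda_\bq \, c_{\bp,\bq}^{\bp+\bq} \quad \text{for all essential } \bp, \bq.$$

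My plan is to pin down $c_{\bp,\bq}^{\bp+\bq}$ explicitly and then write down $\lambda$ directly. Since $\xi_\bp \xi_\bq$ is the restriction of $\xi_\bp \T \xi_\bq$ to $M^{\odot(n+m)}\hookrightarrow M^{\odot n}\T M^{\odot m}$, we have
$$c_{\bp,\bq}^{\bp+\bq} = (\xi_\bp \T \xi_\bq)\bigl(\bff^{\bp+\bq}(v_{M^{\odot n}}\T v_{M^{\odot m}})\bigr).$$
Because $f_i\T 1$ and $1\T f_i$ commute on the tensor product, applying each $f_i^{p_i+q_i}$ in turn and distributing (up to reordering contributions from commutators $[f_i,f_j]$, which the proof of Proposition~\ref{plus} showed lie in the strictly lower piece of the order-filtration \eqref{rest}) yields the Leibniz-type expansion
$$\bff^{\bp+\bq}(v_{M^{\odot n}}\T v_{M^{\odot m}}) \;=\; \sum_{\br'+\br''=\bp+\bq}\binom{\bp+\bq}{\br'}(\bff^{\br'}v_{M^{\odot n}})\T(\bff^{\br''}v_{M^{\odot m}}) \;+\; (\text{lower}).$$
Applying Lemma~\ref{<} to each tensor factor, $(\xi_\bp\T\xi_\bq)$ vanishes on a summand unless $\bp\le\br'$ and $\bq\le\br''$; combined with $\br'+\br''=\bp+\bq$ and the translation-invariance of the monomial order, this forces $\br'=\bp$ and $\br''=\bq$. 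Hence only one term survives and
$$c_{\bp,\bq}^{\bp+\bq} = \binom{\bp+\bq}{\bp} := \prod_{i=1}^N\binom{p_i+q_i}{p_i}.$$

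With this explicit formula in hand, I would simply set $\lambda_\bp := \bp! := \prod_{i=1}^N p_i!$ for every essential $\bp$, and verify
$$\lambda_\bp\,\lambda_\bq\,c_{\bp,\bq}^{\bp+\bq} = \bp!\,\bq!\,\binom{\bp+\bq}{\bp} = (\bp+\bq)! = \lambda_{\bp+\bq},$$
so that the renormalized leading constants all equal $1$.

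The main point to justify carefully is that the non-commutativity of $\mathfrak N$ contributes only to strictly lower terms in the chosen homogeneous monomial order, so that the naive binomial expansion captures the top coefficient exactly. This is essentially the argument already present in the proof of Proposition~\ref{plus} (the \textit{rest} in \eqref{sum1} sits in the subspace \eqref{rest}); once that is invoked, the explicit choice $\lambda_\bp=\bp!$ makes the rest of the proof a one-line computation.
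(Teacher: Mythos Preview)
Your proof is correct and follows essentially the same route as the paper: compute the leading structure constant explicitly as $c_{\bp,\bq}^{\bp+\bq}=\prod_{i=1}^N\binom{p_i+q_i}{p_i}$ by isolating the single surviving term in the Leibniz expansion (via the argument of Proposition~\ref{plus} and Lemma~\ref{<}), and then renormalize by $\xi_\bp\mapsto \bp!\,\xi_\bp$. The paper's proof is terser---it simply cites the proof of Corollary~\ref{sc} for the identification of the surviving term---but your more explicit unpacking of that step is entirely in line with it.
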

\begin{proof}
We note that $c_{\bp,\bq}^{\bp+\bq}=\prod_{i=1}^N \frac{(p_i+q_i)!}{p_i!q_i!}$. In fact, according to the proof of Corollary \ref{sc},
$c_{\bp,\bq}^{\bp+\bq}$ is equal to the product over all $i$ of the coefficients of $f_i^{p_i} \T f_i^{q_i}$ in $(f_i\T 1+ 1\T f_i)^{p_i+q_i}$.
Now the desired renormalization is simply $\xi_\bp\to \xi_\bp\prod_{i=1}^N p_i!$.
\end{proof}

\section{Abelianization and tensor product}\label{hcr2}
Let $\bU$ be a complex algebraic unipotent group with Lie algebra ${\mathfrak N}=\text{Lie\,}\bU$.
Let $\bU$ act on two cyclic finite dimensional complex vector spaces $V$ and $W$.
\begin{lem}
There exists a surjective 
homomorphism of $\mathfrak N^a$ modules
\[
(V\odot W)^a\to V^a\odot W^a.
\]
\end{lem}
\begin{proof}
Let $v\in V$ and $w\in W$ be the cyclic vectors and let $V_0\subset V_1\subset\dots\subset V$ and 
$W_0\subset W_1\subset\dots\subset W$ be the induced PBW filtrations on $V$ and $W$ respectively.
We also denote by $ \U({\mathfrak N})_s(v\otimes w)=(V\odot W)_s\subset V\odot W$ the $s$-th space of the induced PBW filtration on
$V\odot W$. Then by definition
\[
(V\odot W)^a= \bigoplus_{s\ge 0} \frac{(V\odot W)_s}{(V\odot W)_{s-1}}.
\]
We have an obvious embedding $(V\odot W)_s=  \U({\mathfrak N})_s(v\otimes w) \hookrightarrow \sum_{i+j=s} V_i\T W_j$ inducing a
natural homomorphism of $\mathfrak N^a$ modules
\begin{equation}
\Psi: \bigoplus_{s\ge 0}\frac{(V\odot W)_s}{(V\odot W)_{s-1}}\to \bigoplus_{s\ge 0}\frac{\sum_{i+j=s} V_i\T W_j}{\sum_{p+q=s-1} V_p\T W_q}.
\end{equation}
Since $(V\odot W)^a$ is cyclic, $\text{Im\,}\Psi$ is generated by $v\T w\in V_0\T W_0$. 
Given $k,l$ such that $k+l=s$, the natural linear map
$$
\tilde\Phi_{k,l}:V_k\otimes W_l \rightarrow \frac{\sum_{i+j=s} V_i\T W_j}{\sum_{p+q=s-1} V_p\T W_q}, \ u\T u'\mapsto \overline{u\T u'},
$$
has kernel $V_k\otimes W_{l-1} + V_{k-1}\otimes W_l$ and hence gives rise to an injective map:
$$
\Phi_{k,l}:\frac{V_k}{V_{k-1}}\T \frac{W_l}{W_{l-1}} \rightarrow \frac{\sum_{i+j=s} V_i\T W_j}{\sum_{p+q=s-1} V_p\T W_q}, \ \bar u\T \bar u'\mapsto \overline{u\T u'}.
$$
Combining these maps for all $k+l=s$, we get a natural isomorphism of  vector spaces:
$$
\Phi_s:\bigoplus_{i+j=s}\frac{V_i}{V_{i-1}}\T \frac{W_j}{W_{j-1}}\rightarrow \frac{\sum_{i+j=s} V_i\T W_j}{\sum_{p+q=s-1} V_p\T W_q},
$$
which gives rise to an isomorphism of $\mathfrak N^a$ modules:
\begin{equation}\label{B}
\Phi: \bigoplus_{s\ge 0} \bigoplus_{i+j=s}\frac{V_i}{V_{i-1}}\T \frac{W_j}{W_{j-1}}\simeq  
\bigoplus_{s\ge 0} \frac{\sum_{i+j=s} V_i\T W_j}{\sum_{i+j=s-1} V_i\T W_j},
\end{equation}
sending the tensor product of classes of two vectors to the class of their tensor product.
Now the composition $\Phi^{-1}\Psi$ gives the desired surjective homomorphism, since the $\mathfrak N^a$ submodule
of the left hand side of \eqref{B}, generated by the product of the cyclic vectors, is equal to
$V^a\odot W^a$.	 
\end{proof}

%Set $M_s:=\U({\mathfrak N})_sv_M$, then $M^{\otimes n+m}$ is filtered by the subspaces
%$$
%(M^{\otimes n+m})_s=\sum_{s=i_1+\ldots +i_{n+m} } M_{i_1}\otimes\cdots\otimes M_{i_{n+m}}
%$$
%and we have a natural map as well as a graded isomorphism (set $(M^{\otimes n+m})_{-1}=0$):
%$$
%(M^{\odot n+m})^a\rightarrow \bigoplus_{s\ge 0} (M^{\otimes n+m})_s/(M^{\otimes n+m})_{s-1}\simeq
%\bigoplus_{s\ge 0}\bigg(\bigoplus_{k+\ell=s} (M^{\odot n})^a_k\otimes (M^{\odot m})^a_\ell\bigg)
%$$
%inducing a natural map $(M^{\odot n+m})^a\rightarrow (M^{\odot n})^a\otimes (M^{\odot m})^a$.
In the setting of sections~\ref{reps} and~\ref{cr} one gets:
\begin{cor}\label{abeliansurj}
\begin{itemize}
\item[{\it i)}]
For any two positive integers $n$ and $m$ there is a natural map 
$(M^{\odot n+m})^a\rightarrow (M^{\odot n})^a\otimes (M^{\odot m})^a$.
\item[{\it ii)}] For any $n$ there exists a natural surjective map $(M^{\odot n})^a\to (M^a)^{\odot n}$.
\end{itemize}
\end{cor}

So we can attach two rings to $\Fl_{\bU^a}(M^a)$: its homogeneous coordinate ring
\[
\mathbb C[\Fl_{\bU^a}(M^a)]=R(M^a)=\bigoplus_{n\ge 0} ((M^a)^{\odot n})^*\text{ and } R^a(M)=\bigoplus_{n\ge 0} ((M^{\odot n})^a)^*.
\]
It is natural to compare these rings and ask under which conditions these are isomorphic.

\begin{example}
In the settings of Example \ref{exG} the orbit closure $\Fl_{\bU^a}(M^a)$ is the PBW-degeneration of flag varieties.
The rings $R(M^a)$ and $R^a(M)$ turn out to be isomorphic in types $A$, $C$ and $G_2$ (see \cite{F1},\cite{FFiL} and
section \ref{flags}).
\end{example}

\section{Favourable modules}\label{favourablemodule}
Let $\bU$ be a complex algebraic unipotent group with Lie algebra ${\mathfrak N}=\text{Lie\,}\bU$,
and let $M$ be a finite dimensional cyclic $\bU$-module with cyclic generator $v_M$.
By Proposition \ref{plus} we know that
\begin{equation}\label{minkowskiessential}
{\rm es}(M^{\odot n})\supseteq \underbrace{{\rm es}(M)+\dots + {\rm es}(M)}_n,
\end{equation}
we are interested in the case where we have equality for all $n$, or, to put it differently, in the case
when the essential semigroup $\Gamma_{\bU}(M)$ of $M$ is generated by $(1,\text{es}(M))$.

\begin{dfn}\label{singleconeredfn}
We say that a finite dimensional cyclic $\bU$-module $M$ is favourable if there exists an
ordered basis $f_1,\dots,f_N$ of ${\mathfrak N}$ and an induced homogeneous monomial order on the PBW basis
such that
\begin{itemize}
\item There exists a  normal polytope $P(M)\subset \bR^{N}$
such that ${\rm es}(M)$ is exactly the set $S(M)$
of lattice points in $P(M)$.
\item $\forall\, n\in\bN:\,\dim M^{\odot n}=\sharp nS(M)$.
\end{itemize}
\end{dfn}

\begin{rem}\label{singleconeremark}
Since $P(M)$ is normal and $S(M)={\rm es}(M)$, we know that $nS(M)$ is the $n$-fold Minkowski sum
of ${\rm es}(M)$. Since $\sharp {\rm es}(M^{\odot n})=\dim M^{\odot n}$, the two conditions in
the definition above ensure that we have equality in \eqref{minkowskiessential} for all $n\ge 1$.
\end{rem}
\begin{rem}\label{singleconeremark1}
The normality of the polytope $P(M)$ depends
on the choice of the induced homogeneous monomial order!
So the property of the module $M$ to be favourable strongly
depends on the choice of the basis and the orderings.
\end{rem}

\begin{prop}\label{propabelian}
If $M$ is a finite dimensional favourable module, then $(M^{\odot n})^a\simeq (M^a)^{\odot n}$ as $S({\mathfrak N})$-modules for all $n\ge 0$.
In particular, the rings $R(M^a)$ and $R^a(M)$ coincide.
\end{prop}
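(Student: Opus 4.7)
The plan is to construct a canonical surjective $S(\mathfrak N)$-module homomorphism
\[
\phi_n\colon (M^{\odot n})^a \twoheadrightarrow (M^a)^{\odot n}
\]
and then show it is an isomorphism by a dimension count enabled by favourability. To build $\phi_n$ I would reuse the mechanism the paper already set up for the $n+m$ case: the inclusion $M^{\odot n}\hookrightarrow M^{\otimes n}$ is compatible with the PBW-filtration because $U(\mathfrak N)_s\cdot v_M^{\otimes n}\subseteq (M^{\otimes n})_s$, so passing to associated graded spaces gives a map from $(M^{\odot n})^a$ to $(M^a)^{\otimes n}$. The cyclic vector is sent to $v_{M^a}^{\otimes n}$, so the image lies in $U(\mathfrak N^a)\cdot v_{M^a}^{\otimes n}=(M^a)^{\odot n}$; since the source is generated by $v_M^{\otimes n}$, the resulting map $\phi_n$ is surjective.

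For the dimension equality, favourability gives $\dim (M^{\odot n})^a=\dim M^{\odot n}=\sharp nS(M)$ directly. On the other side, the earlier remark $(M^a)^t\simeq M^t$ ensures $\text{es}(M^a)=\text{es}(M)=S(M)$; applying Proposition~\ref{plus} to the \emph{abelian} module $M^a$ yields $\text{es}((M^a)^{\odot n})\supseteq n\cdot \text{es}(M^a)=nS(M)$, and the linear independence of the essential vectors (Corollary~\ref{essbasis} applied to the cyclic $U(\mathfrak N^a)$-module $(M^a)^{\odot n}$) gives $\dim (M^a)^{\odot n}\geq \sharp nS(M)$. The surjectivity of $\phi_n$ forces the reverse inequality $\dim (M^a)^{\odot n}\leq \dim (M^{\odot n})^a=\sharp nS(M)$, so equality holds and $\phi_n$ is an isomorphism.

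For the \emph{In particular} statement, I would check that the family $\{\phi_n\}$ intertwines the two multiplications. The product on $R^a(M)$ is dual to the natural map $(M^{\odot(n+m)})^a\to (M^{\odot n})^a\otimes (M^{\odot m})^a$ from Section~\ref{reps}, while the product on $R(M^a)$ is dual to the inclusion $(M^a)^{\odot(n+m)}\hookrightarrow (M^a)^{\odot n}\otimes (M^a)^{\odot m}$; a short naturality check, using that both arise from the associated graded of the filtered inclusion $M^{\odot(n+m)}\hookrightarrow M^{\otimes(n+m)}$, shows these are identified under $\phi_{n+m}$ and $\phi_n\otimes \phi_m$, yielding the graded ring isomorphism $R^a(M)\simeq R(M^a)$. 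The main subtle point to watch, and the place where favourability is really doing work, is the transfer of the essential set from $M$ to $M^a$: the a priori bound $\dim (M^a)^{\odot n}\geq \sharp nS(M)$ is available only because the remark $(M^a)^t\simeq M^t$ lets us apply Proposition~\ref{plus} on the abelian side with the same essential set $S(M)$.
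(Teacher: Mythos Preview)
Your proposal is correct and follows the same approach as the paper: construct the natural surjection $(M^{\odot n})^a\twoheadrightarrow (M^a)^{\odot n}$ and then compare dimensions. The paper's own proof is extremely terse (two sentences), simply asserting that ``in the favourable situation the dimensions of the modules coincide''; you have correctly unpacked what this means, supplying the lower bound $\dim (M^a)^{\odot n}\geq \sharp nS(M)$ via $\text{es}(M^a)=\text{es}(M)$ and Proposition~\ref{plus}, which is exactly the argument implicit in the paper's one-line dimension claim.
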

\begin{proof}
We have a natural {surjective} map $(M^{\odot n})^a\to (M^a)^{\odot n}$ of $S({\mathfrak N})$-modules by Corollary~\ref{abeliansurj}.
In the favourable situation the dimensions of the modules coincide and hence they are isomorphic.
\end{proof}
\begin{cor}
If $M$ is favourable, then one can naturally identify the two essential semigroups $\Gamma_{\bU}(M)$ and $\Gamma_{{\bU}^a}(M^a)$.
\end{cor}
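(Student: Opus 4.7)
The plan is to reduce the statement to the set-level equality $\mathrm{es}(M^{\odot n}) = \mathrm{es}((M^a)^{\odot n})$ for every $n \ge 1$. Once this is established, both semigroups sit on the same graded underlying set $\bigcup_{n\ge 1}\{n\}\times\mathrm{es}(M^{\odot n})$, and their semigroup operation is in both cases the componentwise addition of multi-exponents (Proposition \ref{plus}), so the identification of semigroup structures is automatic.

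The first ingredient is the Remark in Section~\ref{reps} asserting that, for any cyclic $\mathfrak{N}$-module $M'$, the $\mathfrak{N}^a$-modules $M'^t$ and $(M'^a)^t$ are isomorphic via an isomorphism matching cyclic generators. Since, by the Proposition in Section~\ref{reps}, $\mathrm{es}(X)$ is exactly the set of $\bp$ for which the one-dimensional homogeneous piece $X^t(\bp)$ is nonzero, this immediately yields the general identity $\mathrm{es}(M') = \mathrm{es}(M'^a)$, requiring no favourability hypothesis. Applying this to $M' = M^{\odot n}$ gives $\mathrm{es}(M^{\odot n}) = \mathrm{es}((M^{\odot n})^a)$.

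The second ingredient is Proposition \ref{propabelian}, which in the favourable case produces an $S(\mathfrak{N})$-module isomorphism $(M^{\odot n})^a \simeq (M^a)^{\odot n}$ respecting the cyclic vectors $v_M^{\otimes n}$ (the natural surjection constructed there sends $v_M^{\otimes n}$ to $v_M^{\otimes n}$ by construction, and becomes an isomorphism by a dimension count). Essential sets depend only on the isomorphism class of a cyclic module together with its distinguished generator, so $\mathrm{es}((M^{\odot n})^a) = \mathrm{es}((M^a)^{\odot n})$. Chaining the two equalities completes the argument. There is no real obstacle: the non-trivial input, favourability, has already been fully exploited in Proposition \ref{propabelian}, and what remains is a matter of bookkeeping with essential sets.
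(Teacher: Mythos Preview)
Your argument is correct and is precisely the intended one: the paper states the corollary immediately after Proposition~\ref{propabelian} without proof, and the implied reasoning is exactly your two-step chain $\mathrm{es}(M^{\odot n}) = \mathrm{es}((M^{\odot n})^a) = \mathrm{es}((M^a)^{\odot n})$, the first equality coming from the remark that $(M')^t \simeq (M'^a)^t$ for any cyclic module $M'$, and the second from Proposition~\ref{propabelian}. Since both $\Gamma_{\mathfrak N}(M)$ and $\Gamma_{\mathfrak N^a}(M^a)$ are sub-semigroups of $\bZ\times\bZ^N$ under ordinary addition, the set-level identification automatically respects the semigroup law.
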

\begin{cor}
If $M$ is favourable, then $M^{\odot n}$ is favourable for all $n\ge 1$.
\end{cor}
\begin{proof}
Since $M$ is favorable, the Minkowski sum of $m$ copies of ${\rm es}(M^{\odot n})$ coincides with
${\rm es}(M^{\odot mn})={\rm es}((M^{\odot n})^{\odot m})$,
the polytope $P(M^{\odot n}):=nP(M)\subset \bR^{N}$ is obviously normal, and
the set ${\rm es}(M^{\odot n})$ is the set of lattice points in $P(M^{\odot n})$.
\end{proof}
\section{Flat degenerations}\label{flatdegeneration}
Let $\bU$ be a complex algebraic unipotent group acting on a cyclic finite dimensional complex vector space $M$,
so for the nilpotent Lie algebra ${\mathfrak N}=\text{Lie\,}\bU$  and a cyclic vector $v_M$ we have $M=\U({\mathfrak N})v_M$.
\begin{thm}\label{main}
Let $M$ be a favourable module.
\begin{itemize}
\item[{\it i)}] There exists a flat degeneration of the affine cone $\hat{\Fl}_{\bU}(M)$ into
the affine cone $\hat{\Fl}_{\bU^a}(M^a)$, and for both there exists a flat degeneration
into $\hat{\Fl}_{\bU^a}(M^t)$. 
\item[{\it ii)}] There exists a flat degeneration of ${\Fl}_{\bU}(M)$ into
${\Fl}_{\bU^a}(M^a)$, and for both there exists a flat degeneration
into ${\Fl}_{\bU^a}(M^t)$.
\end{itemize}
The corresponding flat families are equipped with a $\bC^*$-action such that the projection onto $\mathbb A^1$ is equivariant
with respect to the $t^{-1}$-multiplication action on $\mathbb A^1$.
\end{thm}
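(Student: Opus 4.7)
The strategy is the standard Rees algebra construction attached to two compatible filtrations on $R(M)$: the one induced by the PBW filtration and its refinement by the homogeneous monomial order. The favourable hypothesis has done most of the heavy lifting, since Theorem~\ref{fav}, Corollary~\ref{corpropabelian} and Proposition~\ref{propabelian} together identify the three coordinate rings of interest as $R(M)$, $R(M^a)=R^a(M)$ and $R(M^t)$; the same identification applies to $M^t$, which is itself favourable for ${\mathfrak N}^a$ with the same polytope $P(M)$. Thus the problem reduces to producing an explicit flat family over $\mathbb{A}^1$ whose generic and special fibres realise these rings.

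For the PBW degeneration, the ascending PBW filtration of $U({\mathfrak N})$ induces a compatible ascending filtration $F^a_\bullet M^{\odot n}$ on every Cartan power, compatible with the canonical inclusions $M^{\odot(n+m)}\hookrightarrow M^{\odot n}\otimes M^{\odot m}$. Dualising (and re-indexing) this becomes an ascending filtration $F_\bullet R(M)$ of each graded piece $R_n(M)$ compatible with the algebra structure, whose associated graded is $R^a(M)=R(M^a)$ by Proposition~\ref{propabelian}. The associated Rees algebra
\[
\widetilde R(M)\;=\;\bigoplus_{s\ge 0} F_s R(M)\, t^s\;\subseteq\; R(M)[t]
\]
is a torsion-free, hence flat, $\bC[t]$-algebra with fibre $R(M)$ at $t=1$ and fibre $R(M^a)$ at $t=0$, giving the desired degeneration of affine cones $\hat\Fl_{\mathfrak N}(M)\rightsquigarrow \hat\Fl_{{\mathfrak N}^a}(M^a)$. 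The identical construction applied to the finer filtration induced by the monomial order produces the further (or direct) toric degeneration to $\hat\Fl_{{\mathfrak N}^a}(M^t)$; that the corresponding passage $R(M^a)\to R(M^t)$ is also realised by a Rees algebra uses the noted identity $(M^a)^t=M^t$ and its Cartan-power analogue. This settles part~(i).

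For part~(ii), the key point is that both filtrations preserve the original $n$-grading of $R(M)$, so $\widetilde R(M)$ is naturally bigraded: by the polynomial index $s$ (inherited from $\bC[t]$) and by the ring grading $n$. Taking $\mathrm{Proj}$ with respect to the $n$-grading produces a projective flat family over $\mathbb{A}^1$ whose three fibres are the three projective flag varieties. The declared $\bC^*$-action on the total space is the one scaling $t^s$ with weight $-s$, which makes the projection to $\mathbb{A}^1$ equivariant under the $\lambda\cdot t=\lambda^{-1}t$ action on $\mathbb{A}^1$. I expect the main technical obstacle to be not the Rees construction itself, which is formal, but the verification that $F_s R(M)\cdot F_{s'}R(M)\subseteq F_{s+s'}R(M)$; this multiplicativity amounts to the Cartan-product compatibility of the PBW filtration and propagates to the monomial-order refinement essentially by the distribution argument of Proposition~\ref{plus}.
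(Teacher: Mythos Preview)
Your Rees-algebra strategy is sound for the PBW step and is essentially a coordinate-free reformulation of what the paper does by hand: the paper presents $R(M)$ as $\cS/(g_1,\ldots,g_m)$, deforms each $g_i$ by a parameter $x_0$, and checks flatness via irreducibility of the total space; your $\widetilde R(M)$ is the same family, with flatness obtained instead from torsion-freeness over $\bC[t]$. One point of presentation: after dualising, the multiplicative filtration on $R(M)$ is \emph{decreasing}, not increasing. Concretely, $\xi_{\bp}\xi_{\bq}$ is a combination of $\xi_{\br}$ with $\br\ge\bp+\bq$ (Corollary~\ref{sc}), so it is the ideals $R(M)^{\ge\bp}$ that are stable under multiplication, exactly as the paper sets it up. Your ``re-indexing'' does not repair this in a way compatible with the ring grading across all $n$; you should simply run the Rees construction for a descending filtration.

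The substantive gap is in the toric step. The filtration ``induced by the monomial order'' is indexed by $\bN^N$ (equivalently by the totally ordered set of multi-exponents), not by $\bN$, so there is no one-variable Rees algebra to write down and hence no family over $\mathbb{A}^1$ without further work. The paper resolves this by an argument going back to Caldero: one first writes down the toric relations $\bar g_k$ and their lifts $g_k=\bar g_k+\sum_j g_{k,j}$ with $\Gamma_{\mathfrak N}(M)$-degrees $\bq^k<\bq^{k,j}$, and then, since only finitely many multi-exponents occur among the $\bq^k,\bq^{k,j}$, chooses a linear form $e:\bR^N\to\bR$ with $e(\bN^N)\subseteq\bN$ separating them. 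This replaces the $\bN^N$-filtration by an honest $\bN$-filtration (equivalently, it gives the single-parameter weighting needed to insert powers of $x_0$), after which the Rees/one-parameter family argument goes through. Your sentence ``the identical construction applied to the finer filtration'' hides exactly this step; without it you do not yet have a degeneration over $\mathbb{A}^1$, nor the asserted $\bC^*$-action.
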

\begin{rem}
Using the connection with Newton-Okounkov polytopes proved in Section~\ref{NObody}, the degenerations
into the toric variety can also be deduced from \cite{A}. Nevertheless, we state below a full proof because
a slight variation immediately implies also the flat degeneration of ${\Fl}_{\bU}(M)$ in the
PBW-degenerate variety ${\Fl}_{\bU^a}(M^a)$.
\end{rem}
\begin{proof}
We adapt the arguments in \cite{AB}, Proposition 2.2,  respectively \cite{C}, 3.2,
and define a decreasing filtration on the coordinate ring $R(M)$: given $\bp\in\bN^N$, set
\[
R_n(M)^{>\bp}=\bigoplus_{\substack{\bq> \bp\\ \bq\in {\rm es}(M^{\odot n})}} \bC\xi_\bq\ \text{and}\
R_n(M)^{\ge\bp}=\bigoplus_{\substack{\bq\ge \bp\\ \bq\in {\rm es}(M^{\odot n})}} \bC\xi_\bq,
\]
and
\[
R(M)^{>\bp}=\bigoplus_{n\ge 0} R_n(M)^{>\bp}, \ \text{and}\
R(M)^{\ge\bp}=\bigoplus_{n\ge 0} R_n(M)^{\ge\bp}.
\]
Corollary \ref{sc} implies that $R(M)^{\ge\bp}$ and  $R(M)^{>\bp}$ are ideals in
$R(M)$, let $\rm{gr\,} R(M)$ be the associated graded ring:
\[
\rm{gr\,} R(M)=\bigoplus_{n\ge 0} \bigg(\bigoplus_{\bp\in {\rm es}(M^{\odot n})} \frac{R_n(M)^{\ge\bp}}{R_n(M)^{>\bp}}\bigg).
\]
Since the structure constants $c_{\bp,\bq}^{\bp+\bq}$ can be fixed as those after renormalization, we conclude that
$\rm{gr\,} R(M)$ is the coordinate ring of the toric variety defined by ${\rm es}(M)$, i.e.
$\rm{gr\,} R(M)$ is the $\bC$-algebra of the essential semigroup $\Gamma_{\bU}(M)$.
For $\br=(\br',n)\in \Gamma_{\bU}(M)$ we just write $\xi_\br$ for the corresponding element
$\xi_{\br'}\in(M^{\odot n})^*$.

By assumption, ${\rm es}(M)\times 1$ is a minimal set of generators for $\Gamma_{\bU}(M)$,
the corresponding elements $ \xi_{\br^1},\ldots, \xi_{\br^\ell}\in M^*$ generate $R(M)$ by Proposition~\ref{repdescription}.
Let $\mathcal S$ be the polynomial ring $\bC[x_1,\ldots,x_\ell]$. We call the usual grading of $\mathcal S$ the {\it standard
grading} and define a $\Gamma_{\bU}(M)$-grading on the ring by setting
$\deg_\Gamma x_i=\br^i$. Let $I={\rm Ker}\Psi$ be the kernel of the surjective map of $\Gamma_{\bU}(M)$-graded rings
$$
\Psi:\cS\rightarrow \rm{gr\,} R(M),\quad x_i\mapsto \bar \xi_{\br^i}.
$$
The image of a monomial $x_{i_1}\cdots x_{i_q}$ is $\bar \xi_\bp$, where
$\bp=\br_{i_1}+\ldots +\br_{i_q}\in{\rm es}(M^{\odot q})\times q$. The elements $\bar\xi_\bp$, $\bp\in\Gamma_{\bU}(M)$,
are linearly independent, so $I$ is linearly spanned by binomials  $x_{i_1}\cdots x_{i_q}-x_{j_1}\cdots x_{j_q}$,
where $\br_{i_1}+\ldots +\br_{i_q}=\br_{j_1}+\ldots +\br_{j_q}$. We choose generators $\bar g_1,\ldots, \bar g_m\in \cS$
of the ideal $I$ of this form, i.e. $\bar g_k=x_{i_1}\cdots x_{i_{q_k}}-x_{j_1}\cdots x_{j_{q_k}}$. Let $\bq^k=(\bq'^k,q_k)$ be the
$\Gamma_{\bU}(M)$-degree of $\bar g_k$. Since
$\bar g_k(\bar \xi_{\br^1},\ldots,\bar \xi_{\br^\ell})=0$,
it follows that $\bar g_k(  \xi_{\br^1},\ldots,  \xi_{\br^\ell})\in R(M)^{> \bq'^k}$.
More precisely, by Corollary~\ref{sc},
$$
\bar g_k(  \xi_{\br^1},\ldots,  \xi_{\br^\ell})=\sum_{\substack{{\mathbf t}\in \text{es}(M^{\odot q_k})\\ {\mathbf t}>\bq'^k}}
a_{\mathbf t} \xi_{({\mathbf t}, q_k)}
$$
with possibly non-zero coefficients $a_{\mathbf t}$. Since the $\xi_{\br^1},\ldots,  \xi_{\br^\ell}$ generate $R(M)$,
we can find monomials $g_{k,{\mathbf t}}$ of the same standard degree as $\bar g_k$ such that
$g_{k,{\mathbf t}}(\xi_{\br^1},\ldots,  \xi_{\br^\ell})=\xi_{({\mathbf t}, q_k)}$ plus a sum of elements $\xi_{({\mathbf s}, q_k)}$,
${\mathbf s}\in \text{es}(M^{\odot q_k})$,
such that ${\mathbf s}>{\mathbf t}$. Since $\text{es}(M^{\odot q_k})$ is a finite set, this implies that we can find a polynomial
\begin{equation}\label{toricdecomp}
g_k=\bar g_k+\sum_{j=1}^{r_k} g_{k,j}
\end{equation}
such that $g_k$ is homogeneous of standard degree $q_k$,
each $g_{k,j}$ is homogeneous of $\Gamma_{\bU}(M)$-degree
$\bq^{k,j}=(\bq'^{k,j},q_k)$ such that $\bq'^{k,j}>\bq'^k$, and
$$
g_k( \xi_{\br^1},\ldots, \xi_{\br^\ell})=0.
$$
To deal with the degeneration into the abelianized flag variety, one uses as above the $\xi_\bp$ to define a filtration of 
$R(M)$ with respect to the total PBW-degree, the associated graded ring $\text{gr}^a\, R(M)$ is
the homogeneous coordinate ring of the PBW-degenerate flag variety $\Fl_{\bU^a}(M^a)$
(Proposition~\ref{propabelian}).
Rewrite the sum $\sum_{j=1}^{r_i}g_{i,j}$ above
as $\sum_{j=0}^{t_i} h_{i,j}$, where $h_{i,j}$ is a sum of $\Gamma_{\bU}(M)$-homogeneous elements such that the total PBW-degree
of $h_{i,j}$ is equal to $(\text{total PBW-degree of}\ \bar g_i)+j$. Set $g_i^a= \bar g_i +h_{i,0}$.
% and let $I^a$ be the ideal in $\cS$ generated by $g_1^a,\ldots,g_m^a$.
\begin{lem}\label{zwischenlemma}
The natural surjective maps below are isomorphisms:  
$$
\begin{array}{rl}
\text{\it a)}&\Phi: \cS/(g_1\cS + \ldots +g_m\cS)\rightarrow R(M), \quad x_i\mapsto \xi_{\br^i},\\
\text{\it b)}&\Phi^a: \cS/(g_1^a\cS + \ldots +g^a_m\cS)\rightarrow \text{gr}^a\, R(M), \quad x_i\mapsto \xi_{\br^i}.
\end{array}
$$
\end{lem}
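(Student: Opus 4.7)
The plan is to deduce both isomorphisms from a filtered dimension count, exploiting that $\cS$ is $\Gamma_{\mathfrak N}(M)$-graded (with $\deg_\Gamma x_i = \br^i$) while both $R(M)$ and $\mathrm{gr}^a\,R(M)$ are $\Gamma_{\mathfrak N}(M)$-filtered. Surjectivity of $\Phi$ is Theorem~\ref{fav}; surjectivity of $\Phi^a$ follows from the same theorem applied to $M^a$ (favourable with $\text{es}(M^a)=\text{es}(M)$ and the same polytope) together with the identification $\mathrm{gr}^a\,R(M)\cong R(M^a)$ of Proposition~\ref{propabelian}. That $g_k\in\ker\Phi$ is built into the construction. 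For $\Phi^a$, writing $g_k = g_k^a + \sum_{j\ge 1} h_{k,j}$ with each $h_{k,j}$ of strictly larger total PBW-degree, the vanishing $g_k(\xi_{\br^1},\ldots,\xi_{\br^\ell}) = 0$ in $R(M)$ forces $g_k^a(\xi_{\br^1},\ldots,\xi_{\br^\ell})$ to lie in strictly higher PBW-filtration degree than the PBW-homogeneous degree of $g_k^a$ itself; this image therefore vanishes in the relevant piece of $\mathrm{gr}^a\,R(M)$, so $g_k^a\in\ker\Phi^a$.

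For injectivity in part (a) I would count dimensions in each standard degree $n$. Both sides are $\bN$-graded in the standard degree and the map respects this grading. Since $g_k = \bar g_k + \sum_j g_{k,j}$ with every $g_{k,j}$ of strictly larger $\bZ^N$-part of its $\Gamma$-degree than $\bar g_k$, the $\Gamma$-initial form of $g_k$ equals $\bar g_k$, so the $\Gamma$-initial ideal $\mathrm{in}_\Gamma(J)$ of $J := g_1\cS+\cdots+g_m\cS$ contains $I = \bar g_1\cS+\cdots+\bar g_m\cS$. Using that passing to the associated graded of a filtered finite-dimensional vector space preserves dimension,
\[
\dim (\cS/J)_n = \dim (\cS/\mathrm{in}_\Gamma(J))_n \le \dim (\cS/I)_n = \dim \mathrm{gr}\,R_n(M) = \dim R_n(M),
\]
where the final equality is favourability: $\dim R_n(M) = \dim M^{\odot n} = \#nS(M) = \#\,\text{es}(M^{\odot n}) = \dim\,\mathrm{gr}\,R_n(M)$. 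Coupled with the reverse inequality coming from surjectivity, this forces equality throughout and $\Phi$ is an isomorphism.

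Part (b) is handled by the same strategy: $g_k^a = \bar g_k + h_{k,0}$ still has $\Gamma$-initial form $\bar g_k$, since $h_{k,0}$ is a sum of $\Gamma$-homogeneous terms whose $\Gamma$-degree strictly exceeds $\bq'^k$. Hence $\mathrm{in}_\Gamma(J^a) \supseteq I$ where $J^a := g_1^a\cS+\cdots+g_m^a\cS$, and the same chain of inequalities yields $\dim(\cS/J^a)_n \le \dim(\cS/I)_n = \dim R_n(M) = \dim \mathrm{gr}^a\,R_n(M)$, the last equality again by Proposition~\ref{propabelian}. Matched with surjectivity, this gives $\Phi^a$ an isomorphism.

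The main technical subtlety is verifying $\Phi^a(g_k^a)=0$: it requires tracking two gradings simultaneously (the $\Gamma$-grading on $\cS$ that controls the leading terms, and the PBW-grading $\deg^{\mathrm{PBW}} x_i = |\br^i|$ that controls what survives in $\mathrm{gr}^a\,R(M)$), together with the observation that the error $g_k - g_k^a$ only contributes in strictly higher PBW-filtration degree in $R(M)$. Once this filtered bookkeeping is in place the remainder is pure linear algebra.
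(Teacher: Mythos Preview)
Your proof is correct and follows essentially the same route as the paper: both reduce injectivity to the known isomorphism $\cS/(\bar g_1,\ldots,\bar g_m)\simeq\mathrm{gr}\,R(M)$ via the $\Gamma$-filtration, the paper phrasing this as injectivity of the associated graded map in a small commutative diagram while you phrase it as a Hilbert-function inequality through the containment $\mathrm{in}_\Gamma(J)\supseteq I$. One minor remark: the equality $\dim\mathrm{gr}\,R_n(M)=\dim R_n(M)$ needs nothing beyond the fact that passing to the associated graded of a finite filtration preserves dimension, so invoking favourability there is unnecessary.
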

\begin{proof} (of the lemma)
To prove that the map $\Phi$ is an isomorphism we define a filtration and show that the associated graded map is injective.
For $\bq\in \bN^N$
let $\cS^{\ge \bq}_n$ be the span of all monomials $x_1^{{a}^1}\ldots x_\ell^{{a}^\ell}$ in $\cS$ of standard degree $n$
such that ${a}^1\br'^1+\ldots+{a}^\ell\br'^\ell\ge\bq$. Then $\cS^{\ge \bq}=\oplus_{n\ge 0} \cS^{\ge \bq}_n$ is obviously an ideal, we
define $\cS^{> \bq}_n$ and $\cS^{> \bq}$ similarly. Let $\text{gr\,}\cS$ be the associated graded
algebra:
$$
\text{gr\,}\cS =\bigoplus_{n\ge 0} \bigg(\bigoplus_{\bp\in {\rm es}(M^{\odot n})} \frac{\cS_n^{\ge\bp}}{\cS_n^{>\bp}}\bigg).
$$
Note that $g_i$ and $\bar g_i$ are representatives in $\cS$ of the same class in $\text{gr\,}\cS$.
Let $p:\cS\rightarrow \cS/(g_1\cS + \ldots +g_m\cS)$ be
the projection. The algebra $\cS/(g_1\cS + \ldots +g_m\cS)$ is filtered by the images $p(\cS^{\ge \bq})$ of the ideals,
let $\text{gr\,}\big(\cS/(g_1\cS + \ldots +g_m\cS)\big)$ be the associated graded
algebra.
The filtration of $R(M)$ induced by the images $\Phi\circ p(\cS^{\ge \bq})$ is exactly the filtration
of $R(M)$ we started with, so we get induced morphisms:
$$
\begin{array}{rcc}
 \text{gr\,}\cS &   &   \\
{\bigg \downarrow}{\text{gr\,}p}&{\searrow}{\text{gr}\Psi}  &   \\
\text{gr\,}\big(\cS/(g_1\cS + \ldots +g_m\cS)\big)  & \mathrel{\mathop{\longrightarrow
}_{\mathrm{gr}\Phi}}  &   \text{gr\,}R(M)
\end{array}.
$$
The classes of $g_i$ and $\bar g_i$ coincide in the associated graded algebra, so we
have a surjective map $\cS/(\bar g_1\cS + \ldots +\bar g_m\cS)\rightarrow \text{gr\,}\big(\cS/(g_1\cS + \ldots +g_m\cS)\big)$.
The isomorphism $\cS/(\bar g_1\cS + \ldots +\bar g_m\cS)\simeq \rm{gr\,} R(M)$,
implies that ${\text{gr}\Phi}$, and hence also
$\Phi$, is injective, and thus $\Phi$ is an isomorphism. The proof for $\Phi^a$ is similar.
\end{proof}\noindent
({\it Continuation of the proof of Theorem~\ref{main}}) 
We consider now first the degeneration of the affine cone $\hat{\Fl}_{\bU}(M)$ into
the affine cone $\hat{\Fl}_{\bU^a}(M^t)$. By the lemma above we know:
\begin{equation}\label{Rpresent}
R(M)=\cS/(g_1\cS + \ldots +g_m\cS)\
\end{equation}
where the generators $g_i$ are homogeneous with respect to the standard grading and
have a decomposition into homogeneous parts for the $\Gamma(M)$-grading
such that
\begin{equation}\label{Relationspresent}
 g_i=\bar g_i +\sum_{j=1}^{r_i} g_{i,j},\quad \text{where\ }\deg g_{i,j}=\bq^{i,j}> \bq^i=\deg \bar g_i.
\end{equation}
The set $\{\bq^i,\bq^{i,j})\mid i=1,\ldots,m,\ j=1,\ldots,r_i \}\subset \bN^N$
is finite, so by \cite{C}, Lemma~3.2, there exists a linear map $e:\bR^N\rightarrow \bR$ such that
$e(\bN^N)\subseteq\bN$ and $e(\bq^i)<e(\bq^{i,j})$. Let
\begin{equation}\label{Ralgebra}
{\mathcal R}=\cS[x_0]/{\mathcal I},
\end{equation}
where ${\mathcal I}$ is the ideal generated by the elements
\begin{equation}\label{RIdeal}
\bar g_i +\sum_{j=1}^{r_i} x_0^{e(\bq^{i,j})-e(\bq^{i})} g_{i,j}
\end{equation}
for $ i=1,\ldots, m$. Let $X$ be the variety
\begin{equation}\label{varietyX}
X=\left\{v=
\left(\begin{array}{c}v_0 \\ v_1 \\ \vdots \\ v_\ell\end{array}\right)\in\bC^{\ell+1}\mid f(v)=0\ \forall f\in {\mathcal I}\right\}.
\end{equation}
The projection $\pi:\bC^{\ell +1}\rightarrow {\mathbb A}^1$, $v\mapsto v_0$ onto the first coordinate induces a
projection (denoted by the same letter)
$$
\pi:X\rightarrow  {\mathbb A}^1.
$$
The construction implies for the
fibre $\pi^{-1}(1)$ that $x_0=1$ and hence $X_1=\pi^{-1}(1)$ is isomorphic to $\hat\Fl_{\bU}(M)$.
Similarly, for $X_0=\pi^{-1}(0)$ we have $x_0=0$ and hence $X_0$
is isomorphic to $\hat\Fl_{\bU^a}(M^t)$, the affine cone over the toric variety.
We define a $\bC^*$-action on $\bC^{\ell+1}$ by
\begin{equation}\label{Cstaraction}
t\cdot \left(\begin{array}{c}v_0 \\ v_1 \\ \vdots \\ v_\ell\end{array}\right)=
\left(\begin{array}{c}t^{-1}v_0 \\ t^{e(\br^1)} v_1 \\ \vdots \\ t^{e(\br^\ell)}v_\ell\end{array}\right).
\end{equation}
Note that
$$
\begin{array}{rl}
(\bar g_i +\sum_{j=1}^{r_i} &x_0^{e(\bq^{i,j})-e(\bq^{i})} g_{i,j})(t\cdot v)\\
=&t^{e(\bq^i)}\bar g_i(v) + \sum_{j=1}^{r_i} t^{(-e(\bq^{i,j})+e(\bq^{i}))}x_0^{e(\bq^{i,j})-e(\bq^{i})}(v) t^{e(\bq^{i,j})}g_{i,j}(v)  \\
=&t^{e(\bq^i)}(\bar g_i +\sum_{j=1}^{r_i} x_0^{e(\bq^{i,j})-e(\bq^{i})} g_{i,j})(v).
\end{array}
$$
As an immediate consequence we see that $X$ is stable under the $\bC^*$-action, and the map $\pi$ is
$\bC^*$-equivariant with respect to the $t^{-1}$-multiplication action of $\bC^*$ on $\bC$.

By the $\bC^*$-action we know that all fibres over a point different from $0$
are isomorphic to $\hat\Fl_{\bU}(M)$, and the special fibre over $0$
is isomorphic to $\hat\Fl_{\bU^a}(M^t)$. It follows
that $X=\overline{ \bC^*\cdot(\pi^{-1}(1))}$ is irreducible (the 
$\overline{ \bC^*\cdot(\pi^{-1}(1))}$ contains the special fiber,
since the dimension of the special fiber coincides with the dimension of the general 
fibers). Since $\pi$ is surjective, it
follows  that $\pi$ is flat (\cite{H}, Chapter III, Proposition 9.7).

Using part {\it b)} of Lemma~\ref{zwischenlemma}, one can proceed as in the toroidal case
to prove the existence of a flat degeneration of the affine cone $\hat{\Fl}_{\bU}(M)$ into
the affine cone $\hat{\Fl}_{\bU^a}(M^a)$.
To prove the existence of a flat degeneration of $\hat{\Fl}_{\bU^a}(M^a)$
into $\hat{\Fl}_{\bU^a}(M^t)$, one proceeds as above, the only difference being
that instead of starting with $g_k$ as in \eqref{toricdecomp} one starts with $g_k^a= \bar g_k +h_{k,0}$
and decomposes $h_{k,0}$ into its $\Gamma_{\bU}(M)$-homogeneous parts.

To prove the second part of the theorem we extend the natural standard grading on $\cS$ to $\cS[x_0]$ by setting $\deg x_0=0$,
the ideal ${\mathcal I}$ (see \eqref{RIdeal}) is homogeneous with respect to this grading, so the algebra ${\mathcal R}=\cS[x_0]/{\mathcal I}$
inherits a natural grading. Due to the canonical isomorphism ${\mathcal R}_0\simeq \bC[x_0]$,
the variety $Y=\text{Proj\,}{\mathcal R}$ comes naturally equipped with a projective morphism
$\theta:Y\rightarrow \mathbb A^1$. By replacing the arguments above by the appropriate ones
for proper maps one proves part {\it ii)} of the theorem.
\end{proof}

\begin{cor}\label{normality}
If $M$ is a finite dimensional favourable module, then $\Fl_{\bU}(M))$, its PBW-degeneration
$\Fl_{\bU^a}(M^a)$ and its toric degeneration $\Fl_{\bU^a}(M^t)$
are projective varieties which for the given embeddings are projectively normal and arithmetically Cohen-Macaulay.
\end{cor}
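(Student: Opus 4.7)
The plan is to reduce both the projective normality and the arithmetic Cohen-Macaulay property for the three varieties to the corresponding statements for the toric variety $\Fl_{\mathfrak N^a}(M^t)$, exploiting the flat degenerations furnished by Theorem~\ref{main}. Concretely, by part {\it i)} of that theorem, both affine cones $\hat\Fl_{\mathfrak N}(M)$ and $\hat\Fl_{\mathfrak N^a}(M^a)$ admit flat $\bC^*$-equivariant degenerations over $\mathbb A^1$ whose special fibre is $\hat\Fl_{\mathfrak N^a}(M^t)$, and by the $\bC^*$-action all non-zero fibres are isomorphic to the corresponding cone. Since projective normality means normality of the affine cone and arithmetic Cohen-Macaulayness means Cohen-Macaulayness of the affine cone, it suffices to check both conditions on $\hat\Fl_{\mathfrak N^a}(M^t)$ and then transport them along the family.

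For the toric special fibre I would proceed as follows. By Proposition~\ref{prop-both} the homogeneous coordinate ring of $\Fl_{\mathfrak N^a}(M^t)$ is the semigroup algebra $\bC[\Gamma_{\mathfrak N}(M)]$ of the essential semigroup. Favourability of $M$, together with the normality of the polytope $P(M)$, ensures that for every $n$ one has ${\rm es}(M^{\odot n}) = nS(M) = nP(M)\cap\bZ^N$, so $\Gamma_{\mathfrak N}(M)$ equals the semigroup of lattice points in the cone $C(S(M))$ over $P(M)$. This semigroup is therefore saturated (i.e.\ normal) in the sense of affine semigroups, and its semigroup algebra is the coordinate ring of a normal affine toric variety.

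The classical theorem of Hochster on normal affine semigroup rings now gives directly that $\bC[\Gamma_{\mathfrak N}(M)]$ is Cohen-Macaulay, and it is integrally closed as a ring by the normality of the semigroup. This proves the toric case of the corollary. To finish, I would transfer these properties to the generic fibre of the degenerations constructed in Theorem~\ref{main}: both the Cohen-Macaulay locus and the normal locus of a Noetherian flat morphism are open on the total space (EGA IV), and the $\bC^*$-equivariance makes all fibres over $\mathbb A^1\setminus\{0\}$ isomorphic, so these properties inherited by a Zariski neighbourhood of the special fibre are automatically enjoyed by the general fibre $\hat\Fl_{\mathfrak N}(M)$ and $\hat\Fl_{\mathfrak N^a}(M^a)$ respectively.

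The step I expect to be the most delicate is the last one, namely invoking the openness of the CM and normal loci in the flat family \eqref{Ralgebra}--\eqref{varietyX} in a rigorous way. One has to check that the hypotheses of the EGA openness theorems are met (Noetherianity of $\mathcal R$ and flatness of $\pi$, both already established in the proof of Theorem~\ref{main}), and that the toric fibre is in fact geometrically reduced and irreducible so that the normality statement at the level of rings is equivalent to the projective normality of the embedded variety. Everything else is essentially formal given Theorem~\ref{fav}, Theorem~\ref{main} and Hochster's theorem.
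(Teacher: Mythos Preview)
Your proof is correct and follows essentially the same strategy as the paper: first establish projective normality and the arithmetic Cohen--Macaulay property for the toric special fibre via the normality of $P(M)$ (the paper cites \cite{CLS}, Theorem 9.2.9 and Exercise 9.2.8, where you invoke Hochster's theorem directly), and then transfer these properties along the $\bC^*$-equivariant flat family of Theorem~\ref{main} (the paper simply cites \cite{knut} for this transfer, where you spell out the EGA openness argument). The only cosmetic difference is in the references chosen for these two standard facts.
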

\begin{proof}
The condition on $M$ to be favourable implies that the polytope $P(M)$ is normal and hence
the variety $\Fl_{\bU^a}(M^t)\simeq X({\rm es}(M))\subseteq \mathbb P(M^t)$ is
projectively normal and arithmetically Cohen-Macaulay (see Theorem 9.2.9 and Exercise 9.2.8 \cite{CLS}),
i.e. the affine cone $\hat\Fl_{\bU^a}(M^t)\subseteq M^t$ over the projective variety is normal and Cohen-Macaulay.

Since one knows now that the special fibre has the claimed good properties, it is a standard fact for flat families
which are trivial away from ``$0$" that all fibres have these nice properties, see for example \cite{knut}.
\end{proof}

\section{Newton-Okounkov bodies and filtrations}\label{NObody}
Our general reference for more details on Newton-Okounkov bodies is \cite{KK}.
Let $\bU$ be a complex algebraic unipotent group acting on a cyclic finite dimensional complex vector space $M$.
Without loss of generality assume the action to be faithful.
Denote by ${\mathfrak N}=\text{Lie\,}\bU$ its Lie algebra, let $v_M\in M$ be a fixed cyclic vector and denote by $\bV$
the stabilizer in $\bU$ of $[v_M]\in\mathbb P(M)$.

The field $\mathbb C(\Fl_{\bU}(M))$ of rational functions on $\Fl_{\bU}(M)$ coincides with the field $\mathbb C(\bU. [v_M]))$ of rational functions on the orbit.
The orbit of a unipotent group is an affine space. To determine the Newton-Okounkov body of $\mathbb C(\Fl_{\bU}(M))$
(associated to a valuation), we want to fix an appropriate parameterization of this affine space.

\begin{example}\label{decomp}
Let the notation be as in Example~\ref{exG}. For a dominant weight $\lambda$ let $P_\lambda$ be the stabilizer of the highest weight line in $V(\lambda)$
and let $P_\lambda^-$ be the opposite parabolic subgroup. Denote by $\bU'$ the unipotent radical of $P_\lambda^-$,
the stabilizer $\bV$ of $[v_\lambda]$ in $\bU=U^-$ is the intersection $\bU\cap L_\lambda$
with the Levi subgroup $L_\lambda\subset P_\lambda$. Then $\bU=\bU' \bV$, $\bU'\cap \bV=\{id\}$,
$\bU. [v_M]=\bU'. [v_M]$, and the stabilizer in $\bU'$ is trivial. It follows that $\bU. [v_M]\simeq \bU'$ as an affine variety.
\end{example}

\subsection{The decomposition case}\label{decompNO}  
Suppose first that $\bU$ admits a subgroup $\bU'$ such that $\bU=\bU'\bV$ and $\bU'\cap\bV=\{id\}$
(as in Example~\ref{decomp} above).
We fix a basis $\mathbb B_{\mathfrak N'}$ of $\mathfrak N'=\text{Lie\,} \bU'$,
a basis $\mathbb B_{\mathbb V}=\{f_{N+1},\ldots,f_r\}$ of $\text{Lie\,} \mathbb V$ and we set
 $\mathbb B_{\mathfrak N}=\mathbb B_{\mathbb \bU'}\cup \mathbb B_{\mathbb V}$.

Fix a homogeneous monomial ordering on the monomials in the elements of $\mathbb B_{\mathfrak N}$.
Since $\bV$ is the stabilizer of $v_M$, it is obvious that $ (M^{\odot n},{\mathbf p})$ is essential only if 
$p_{N+1}=\ldots=p_{r}=0$. The essential semigroup $\Gamma_{\bU}(M)$ is by definition a subset of 
$\bN\times\bZ^r$, but since the last entries are necessarily identically zero for an essential multi-exponent, 
we omit in the following these components and view $\Gamma_{\bU}(M)$ as a subset of $\bN\times\bZ^N$. 
By abuse of notation we write $\bfp\ge \bfq$ 
for $\bp,\bq\in \bN^{N}$ if $\bfp'\ge \bfq'$, where $\bfp',\bfq'\in \bZ^{r}$
are the tuples obtained from $\bfp,\bfq$ by adding zero entries. 

Let $x_{1},\ldots,x_{N}$ be the basis of $(\mathfrak N')^*$ 
dual to the basis $\{f_{1},\ldots, f_{N}\}$ of $\mathfrak N'$.
The exponential map 
$$
\exp: \mathfrak N'\rightarrow \bU',\quad X\mapsto \exp(X),
$$
is an isomorphism of affine varieties. So the field of
rational functions $\bC(\Fl_{\bU}(M))$ can be identified with $\bC(x_{1},\ldots, x_{N})$ and the $x_{j}$,
$j=1,\ldots,N$, form a system of parameters.
We write $\bfx^\bfp$ for a monomial $x_{1}^{p_{1}}\cdots x_{{N}}^{p_{N}}$. 

We define a $\bZ^N$-valued valuation on $\bC(\Fl_{\bU}(M))$ as follows:
given a polynomial $g(\bfx)=\sum a_\bfp \bfx^\bfp$, we define
\begin{equation}\label{valuationdef}
\nu(g(\bfx))=\min\{\bfp \mid a_\bfp\not=0\}.
\end{equation}
For a rational function $h = \frac{g}{g'}$ we define $\nu(h(\bfx))=\nu(g(\bfx))-\nu(g'(\bfx))$.
The valuation $\nu$ is called the {\it lowest term valuation with respect to the
para\-meters $x_{{1}},\ldots, x_{N}$ and the total order ``$\,\ge$''}.

Let $\xi_{\mathbf 0}$ (section~\ref{hcr1}) 
be the dual vector of the fixed cyclic generator $v_M \in M$. Consider the homogeneous coordinate ring
$A=\bC[\Fl_{\bU}(M)]=\bigoplus_{n\ge 0} A_n$ of the embedded variety $\Fl_{\bU}(M)\hookrightarrow \bP(M)$.
We associate to $A$ the valuation semigroup $S_A$ as follows:
\begin{equation}\label{valuationsemigroup}
S_A=S(A,\nu,\xi_{\mathbf 0})=\bigcup_{n\ge 1}\{(n,\nu(\frac{g}{\xi_{\mathbf 0}^n}))\mid g\in A_n-\{0\}\}\subseteq \bN\times\bZ^N.
\end{equation}
The fact that we have a valuation implies that this is a semigroup.
Recall that we view the essential semigroup $\Gamma_{\bU}(M)$ as a subset of $\bN\times\bZ^N$.
\begin{prop}\label{semigleichsemi1}
The essential semigroup and the valuation semigroup coincide:
$$
S_A=\Gamma_{\bU}(M).
$$
\end{prop}
\begin{proof}
Let $(n,\bfp)\in \{n\}\times \text{es}(M^{\odot n})$, we want to evaluate
$$
\xi_\bfp\bigg( \exp(x_{1}f_{1}+\ldots+ x_{N}f_{N}) v_{M^{\odot n}}\bigg).
$$
Since the argument of the exponential is a nilpotent operator, the sum is finite. 
It is necessary to reorder the factors within certain monomials. So
new terms may occur, but they are of lower total degree and hence strictly smaller with respect
to the homogeneous monomial order. So the exponential can be written as a linear combination
of ordered monomials as follows: 
$$
\sum_{k\ge 0} \frac{1}{k!} (x_{\ell_1}f_{\ell_1}+\ldots+ x_{\ell_N}f_{\ell_N})^k v_{M^{\odot n}}=
\sum_{k\ge 0} \bigg(\sum_{\substack{\mathbf q\in \mathbb N^N\\\vert\bq\vert=k} }
(c_\bq\mathbf x^{\mathbf q}\mathbf f^\bq +\sum_{\substack{\bq'\in \mathbb N^N\\ \vert\bq'\vert<k}} a_{\bq',\bq}\mathbf x^{\mathbf q}\mathbf f^{\bq'})v_{M^{\odot n}}\bigg),
$$
where $c_\bq\not=0$.
If ${\mathbf f}^{\mathbf r} v_{M^{\odot n}}$ is not an essential vector, 
then the vector can be rewritten as a linear combination of smaller essential vectors:
$$
\mathbf x^{\mathbf q}{\mathbf f}^{\mathbf r} v_{M^{\odot n}}= 
\sum_{\substack{\mathbf s<\mathbf r \le \bq \\ (n,\mathbf s)\in (n,\text{es}(M^{\odot n}) )}} \mathbf x^{\mathbf q}b_{\mathbf s} {\mathbf f}^{\mathbf s} v_{M^{\odot n}}.
$$
So we can rewrite the sum above as a linear combination of essential vectors:
$$
\exp(x_{\ell_1}f_{\ell_1}+\ldots+ x_{\ell_N}f_{\ell_N}) v_{M^{\odot n}}=
\sum_{(n,\bfq)\in (n,\text{es}(M^{\odot n}))}\bigg(\frac{c_\bfq}{\vert\bq\vert} \mathbf x^\bfq +
\sum_{\mathbf r > \bfq }a'_{\bfq,\mathbf r} \mathbf x^{\mathbf r} \bigg){\mathbf f}^\bfq v_{M^{\odot n}}.
$$
It follows that 
$$
\xi_\bfp\bigg( \exp(x_{\ell_1}f_{\ell_1}+\ldots+ x_{\ell_N}f_{\ell_N}) v_{M^{\odot n}}\bigg)=
\frac{c_\bfp}{\vert\bp\vert} \mathbf x^\bfp +
\sum_{\mathbf r > \bfp }a'_{\bfp,\mathbf r} \mathbf x^{\mathbf r}.
$$
Since the coefficient $c_\bfp\not=0$, we get
$\nu\big(\frac{\xi_\bfp}{\xi_{\mathbf 0}^n}\big)=\bfp$.
It follows that $\Gamma_{\bU}(M)\subseteq S_A$. Since the $\xi_\bfp$ form a basis of
$A$ with pairwise different evaluations, we have equality.
\end{proof}

\subsection{The general case}\label{generalcaseNO}
If we do not have the decomposition as in \ref{decompNO}, then the possible choices for a basis of $\mathfrak N$ are more restrictive.
We fix a sequence of subgroups $\bU=\bU_1\supset\ldots\supset \bU_r\supset \bU_{r+1}=\{id\}$ such that $\bU_{i+1}$ is normal in
$\bU_{i}$ for $i\ge 1$ and of codimension 1. Since $\bU$ is unipotent, such a sequence always exists. 
We get an induced filtration for $\bV= \bV_{i_1}\supset\ldots\supset \bV_{i_s}\supset\bV_{i_{s+1}}=\{id\}$
by subgroups, i.e. for $j=1,\ldots,s$ we have
$$
\bV_{i_j}=\bU_{i_j}\cap\bV=\bU_{i_j+1}\cap\bV=\ldots=\bU_{i_{j+1}-1}\cap\bV\supsetneq  \bV_{i_{j+1}}=\bU_{i_{j+1}}\cap\bV 
$$
and $\hbox{codim}_{\bV_{i_{j}}} \bV_{i_{j+1}}=1$.
Fix a basis $\mathbb B_{\mathfrak N}=\{f_1,\ldots,f_r\}$ of ${\mathfrak N}$ compatible with the filtrations above, so $\{f_i,\ldots,f_r\}$ is a basis
of $\hbox{Lie\,}\bU_i$ and for all $j=1,\ldots,s$, the subset $\{f_{i_j},f_{i_{j+1}},\ldots,f_{i_s}\}$ is a basis of $\hbox{Lie\,}\bV_j$. In particular,
$\mathbb B_{\mathbb V}=\{f_{i_1},,\ldots,f_{i_s}\}$ is a basis of $\hbox{Lie\,}\mathbb V$.
We fix an induced homogeneous monomial order ``$\ge$'' on the monomials in $\mathbb B_{\mathfrak N}$.
\begin{lem}\label{unipotentonepugs}
\begin{itemize}
\item[{\it i)}] $ (M^{\odot n},{\mathbf p})$ is essential only if $p_{i_1}=\ldots=p_{i_s}=0$.
\item[{\it ii)}] Let $\mathbb G(f_i)$ be the subgroup $\{\exp(t f_i)\mid t\in\mathbb C\}\subset \bU$. The product map
$$
m_{\bU}:\mathbb G(f_1)\times \mathbb G(f_2)\times\cdots\times \mathbb G(f_r)\rightarrow \bU
$$
is an isomorphism of affine varieties.
\item[{\it iii)}]  Let $\{f_{\ell_1},\ldots,f_{\ell_N}\}$ be the complement in $\mathbb B_{\mathfrak N}$ of $\mathbb B_{\mathbb V}$. The product map
$$
m_{\bU/\bV}:\mathbb G(f_{\ell_1})\times \mathbb G(f_{\ell_2})\times\cdots\times \mathbb G(f_{\ell_N})\rightarrow \bU /\bV
$$
induces an isomorphism of affine varieties.
\end{itemize}
\end{lem}
\begin{proof} Suppose $\mathbf f^{\bp}$ is such that $p_{i_j}>0$ for some $j\in\{1,\ldots,s\}$. 
By commuting $f_{i_j}$ to the right one can rewrite $\mathbf f^{\mathbf p}$ as a linear combination 
of strictly smaller elements and a monomial that annihilates $v^{\odot n}$, so $ (M^{\odot n},{\mathbf p})$ is not essential.

Let $1\le i\le r$. Since $ \mathbb G(f_i)$ is a subgroup of $\bU_i$, we have a natural map
$$
m_\bU^i: \mathbb G(f_i)\times \mathbb U_{i+1}\rightarrow \mathbb U_i.
$$
Since $\mathbb G(f_i)\cap \mathbb U_{i+1}$ is a finite group and unipotent, the intersection is equal to $\{id\}$
and the map is hence injective. Now $\mathbb U_{i},\mathbb U_{i+1}$ are unipotent and $\mathbb U_{i+1}$ is a normal subgroup in 
$\mathbb U_i$. Using the Zassenhaus formula one shows that the map is also surjective. Both varieties are smooth, so by 
Zariski's main theorem (\cite{Kum}, Theorem A.11) it follows that the map is in fact an isomorphism. Now {\it ii)} follows by induction.

To prove {\it iii)}, note that $\bU_i=\mathbb G(f_i)\bU_{i+1}=\bU_{i+1} \mathbb G(f_i)$ because $\bU_{i+1}$ is normal in $\bU_{i}$.
By applying this argument to the subgroups corresponding to $\bV$, it follows by part {\it ii)} of the lemma:
$$
\bU=\mathbb G(f_1) \mathbb G(f_2)\cdots \mathbb G(f_r)=
\mathbb G(f_{\ell_1}) \cdots \mathbb G(f_{\ell_N})  \mathbb G(f_{i_s})\cdots  \mathbb G(f_{i_1})
$$
By applying the same arguments to $\bV$ as to $\bU$ in {\it ii)} and above, we see 
$$
\bV=\mathbb G(f_{i_1})\cdots  \mathbb G(f_{i_s})=\mathbb G(f_{i_s})\cdots  \mathbb G(f_{i_1})
$$
and hence $\bU=\mathbb G(f_{\ell_1}) \cdots \mathbb G(f_{\ell_N}) \bV$, which finishes the proof of the lemma.
\end{proof}
Let $\{x_{\ell_1},\ldots,x_{\ell_N}\}$ be the basis of $({\mathfrak N}/\hbox{Lie\,}\bV)^*$ dual to the basis
$\{\bar f_{\ell_1},\dots,\bar f_{\ell_N}\}$ of ${\mathfrak N}/\hbox{Lie\,}\bV$.
Since ${\mathbb U}\cdot [v_M]\subset \Fl_{\bU}(M)$ is a smooth open affine subset isomorphic
to $\mathbb G(f_{\ell_1})\times \cdots\times \mathbb G(f_{\ell_N})$, the field of
rational functions $\bC(\Fl_{\bU}(M))$ can be identified with $\bC(x_{\ell_1},\ldots, x_{\ell_N})$ and the $x_{\ell_j}$,
$j=1,\ldots,N$, form a system of parameters.
We write $\bfx^\bfp$ for a monomial $x_{\ell_1}^{p_{\ell_1}}\cdots x_{{\ell_N}}^{p_{\ell_N}}$, where
$\bfp\in\bN^N$. We use the same convention as above: $\bfp\ge \bfq$ if $\bfp'\ge \bfq'$, 
where $\bfp',\bfq'\in \bZ^{r=N+s}$ are the tuples obtained from $\bfp,\bfq$ by adding zero entries in the places $i_1,\ldots,i_s$.
Let $\ge$ also denote the induced monomial order on $\bC[x_{\ell_1},\ldots, x_{\ell_N}]$,
i.e. $\bfx^\bfp\ge \bfx^\bfq$ if and only if $\bfp\ge \bfq$.

Now we define the $\bZ^N$-valued valuation on $\bC(\Fl_{\bU}(M))$ and the valuation semigroup $S_A$
as in \eqref{valuationdef} and \eqref{valuationsemigroup}.
\begin{prop}\label{semigleichsemi2}
The essential semigroup and the valuation semigroup coincide:
$$
S_A=\Gamma_{\bU}(M).
$$
\end{prop}
\begin{proof}
We have to evaluate
$\xi_\bfp\bigg( \exp(x_{\ell_1}f_{\ell_1}) \cdots \exp(x_{\ell_N}f_{\ell_N}) v_{M^{\odot n}}\bigg)$
for an element $(n,\bfp)\in \{n\}\times \text{es}(M^{\odot n})$.
Now
\begin{equation}\label{function1}
\exp(x_{\ell_1}f_{\ell_1}) \cdots \exp(x_{\ell_N}f_{\ell_N}) v_{M^{\odot n}}
=\bigg(\sum_{\bfq\in \bN^N}a_\bfq \mathbf x^\bfq {\mathbf f}^\bfq \bigg)v_{M^{\odot n}}
\end{equation}
for some constants $a_\bfq\not=0$. The same arguments as in the proof of Proposition~\ref{semigleichsemi1}
apply: after rewriting non essential vectors as a linear combination of smaller essential vectors one gets:
$$
\xi_\bfp\bigg(\exp(x_{\ell_1}f_{\ell_1}) \cdots \exp(x_{\ell_N}f_{\ell_N})) v_{M^{\odot n}}\bigg)=
a_\bfp \mathbf x^\bfp +\sum_{\bfp' > \bfp }a'_{\bfp'} \mathbf x^{\bfp'}.
$$
Since the coefficient $a_\bfp\not=0$, we get
$\nu\big(\frac{\xi_\bfp}{\xi_{\mathbf 0}^n}\big)=\bfp$.
It follows that $\Gamma_{\bU}(M)\subseteq S_A$. Since the $\xi_\bfp$ form a basis of
$A$ with pairwise different evaluations, we have equality.
\end{proof}

\subsection{Newton-Okounkov body} 
One associates to $A$ also the cone $C$ generated by $S_A=\Gamma_{\bU}(M)$ in $\bR\times \bR^N$:
$$
C=\text{smallest closed convex cone centered at $0$ containing $S_A$.}
$$
\begin{definition}
The {\it Newton-Okounkov body} $\Delta(\Fl_{\bU}(M),\nu,\ge)$ of $\Fl_{\bU}(M)$ in $\bR^N$ 
is the projection of the intersection $C\cap 1\times \bR^N$ on $\bR^N$.
In other words, the Newton-Okounkov body is the closure of the convex hull of the rescaled exponents:
$$
\Delta(\Fl_{\bU}(M))=\Delta(\Fl_{\bU}(M),\nu,\ge)= \overline{\text{convex}(\bigcup_{{n\ge 1}} \{\frac{\bfp}{n}\mid (n,\bfp)\in S_A\}) }.
$$
\end{definition}
\begin{thm}
Let $\bU$ be a be a complex algebraic unipotent group with Lie algebra $\mathfrak N$ and let $M$
be a finite dimensional cyclic $\bU$-module with cyclic vector $v_M$ and stabilizer $\bV\subset \bU$
of $[v_M]\in \mathbb P(M)$.
Assume that either $\bU$ admits a decomposition $\bU=\bU'\bV$ as in section~\ref{decompNO} or the basis
of $\mathfrak N$ has been chosen as in section~\ref{generalcaseNO}.
If $M$ is favourable and $P(M)$ is the associated
lattice polytope (Definition~\ref{singleconeredfn}), then we have for the Newton-Okounkov bodies:
$$
\Delta(\Fl_{\bU}(M))=P(M)=\Delta(\Fl_{\bU^a}(M^a)).
$$
\end{thm}
\section{The multicone version}\label{multiconeversion}
Let $G$ be a simple simply connected complex algebraic group $G$.
We fix a Cartan decomposition of its Lie algebra $\fg=\fn^+\oplus\fh\oplus\fn^-$, an ordered basis
$\underline\fn^-:=\{f_1,\ldots,f_N\}$ of $\fn^-$, and we fix a homogeneous monomial order on the PBW basis.
Let $\bU\subset G$ be the maximal unipotent subgroup with Lie algebra $\fn^-$.

Let $\omega_1,\ldots, \omega_n$ be the fundamental weights for $G$.
For a dominant integral weight $\la=a_1\omega_1+\ldots +a_n\omega_n$
we denote by the support $\sup \la$ the set of fundamental weights
\begin{equation}\label{support}
\sup\la=\{\omega_i\mid a_i\not=0\}.
\end{equation}

By Proposition \ref{plus} we know that
\begin{equation}\label{minkowskiessential2}
{\rm es}(V(\la))\supseteq \underbrace{{\rm es}(V(\omega_1))+\dots+{\rm es}(V(\omega_1))}_{a_1}+\dots +
\underbrace{{\rm es}(V(\omega_n))+\dots+{\rm es}(V(\omega_n))}_{a_n},
\end{equation}
we are interested in the case when we have equality for all dominant weights:

\begin{dfn}
The pair $(G,\underline\fn^-)$ is called {\it favourable} for the fixed order if
\begin{itemize}
\item for each fundamental weight $\omega_i$ there exists a normal polytope $P_i$
such that the lattice points $S_i\subset P_i$ index the essential monomials for $V(\omega_i)$,
\item for a dominant weight $\lambda=a_1\omega_1+\ldots+a_n\omega_n$ let $P_\lambda:= a_1P_1+...+a_nP_n$
be the corresponding Minkowski sum of the polytopes $P_i$. Let $S_\lambda\subset P_\lambda$ be the set of lattice
points. Then:
$$
\forall\ \text{dominant weights\,}\la:\ \dim V(\lambda) = \sharp S_\lambda=\sharp(\sum_{i=1}^n a_iS_i).
$$
\end{itemize}
\end{dfn}
\begin{rem}
As in Definition~\ref{singleconeredfn}, the conditions are split into two parts of rather different kind:
the first deals with the structure of the fundamental representations as $\fn^-$-modules,
the second is of more combinatorial nature comparing dimension formulas for representations
with formulas counting lattice points in polytopes.

The conditions imply equality in \eqref{minkowskiessential2} (compare Remark~\ref{singleconeremark}):
$$
\sharp {\rm es}(V(\la))= \dim V(\lambda) = \sharp\left(\sum_{i=1}^n  a_iS(\omega_i)\right) = \sharp(\sum_{i=1}^n a_i{\rm es}(V(\omega_i)))
$$
which, by the inclusion in \eqref{minkowskiessential2} is only possible if
$$
{\rm es}(V(\la))= \underbrace{{\rm es}(V(\omega_1))+\dots+{\rm es}(V(\omega_1))}_{a_1}+\dots +
\underbrace{{\rm es}(V(\omega_n))+\dots+{\rm es}(V(\omega_n))}_{a_n}.
$$
\end{rem}
\begin{rem}
The condition for $(G,\underline\fn^-)$ to be {favourable} implies that the global Okounkov body (see for example \cite{Gon}) 
for $G/B$ is a polyhedral cone,
and one gets an explicit description of the cone. More precisely, let
$$
GOB(G/B)=\{(p,\lambda)\in\mathbb R^N\times X_{\mathbb R}\mid \lambda=\sum a_i\omega_i, a_1,\ldots,a_n\in \mathbb R_{\ge 0},
p\in\sum a_iP_i\},
$$
where $X_{\mathbb R}$ is the real span of the weight lattice. It is easy to see that this is a convex cone such that if $\lambda$
is a regular dominant integral weight, then the fibre $\pi^{-1}(\lambda)$ of the projection map 
$$
\Psi: GOB(G/B)\hookrightarrow \mathbb R^N\times X_{\mathbb R}\longrightarrow X_{\mathbb R}
$$
is the polytope $P_\lambda$, so $GOB(G/B)$ is the global Okounkov body for $G/B$. Consider the vertices  $\{p_j^i \mid j=1,\ldots,r_i\}$ 
of the polytope $P_i$ for $i=1,\ldots,n$. A simple calculation shows that $GOB(G/B)$ is the cone spanned by 
$$
\{(p_j^i,\omega_i)\mid i=1,\ldots,n;j=1,\ldots,r_i\}.
$$
\end{rem}
This strong condition has some beautiful consequences:
\begin{thm}\label{theorem218}
If $(G,\underline\fn^-)$ is favourable, then
\begin{itemize}
\item[{\it i)}] $V({\la+\mu})^a\simeq V({\la})^a\odot V({\mu})^a$ as $S(n^-)$-modules,
\item[{\it ii)}] the representations $V(\la)$, $\la$ a dominant weight, are favourable for $\fn^-$,
\item[{\it iii)}] for all dominant integral weights $\la$, { the projective varieties $\Fl_{\bU^a}(V^a_\la)\subseteq \bP(V^a_\la)$
and $\Fl_{\bU^a}(V^t_\la)\subseteq \bP(V^t_\la)$ are projectively normal and arithmetically Cohen-Macaulay,}
\item[{\it iv)}] there are embeddings $\Fl_{\bU^a}(V(\la+\mu)^a)\hookrightarrow \Fl_{\bU^a}(V(\la)^a)\times \Fl_{\bU^a}(V(\mu)^a)$,
\item[{\it v)}] the variety $\Fl_{\bU^a}(V(\la)^a)$ depends on the support $\sup\la$ of $\la$ only (see \eqref{support}),
\item[{\it vi)}] the vectors $\{v_{\lambda}(\bp)\mid \bp\in S(\lambda)\}$ form a basis for $V(\lambda)$,
$V(\la)^a$ respectively $V(\lambda)^t$ (depending on whether one chooses $v_{\lambda}(0)$ to be the cyclic generator
of $V(\lambda)$, $V(\lambda)^a$ or $V(\la)^t$).
\end{itemize}
\end{thm}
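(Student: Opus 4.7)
The plan is to bootstrap from the multicone favourability hypothesis up to favourability of each individual $V(\lambda)$, after which parts (iii)--(vi) become formal consequences of the theorems already proved in the paper. The central identity underpinning the whole argument is
\[
\operatorname{es}(V(\lambda))=S_\lambda\quad\text{for every dominant }\lambda,
\]
and (vi) follows from it immediately via Corollary~\ref{essbasis}.

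This identity is forced by matching cardinalities. Iterating Proposition~\ref{plus} yields $S_\lambda=a_1\operatorname{es}(V(\omega_1))+\cdots+a_n\operatorname{es}(V(\omega_n))\subseteq\operatorname{es}(V(\lambda))$; Corollary~\ref{essbasis} supplies $\#\operatorname{es}(V(\lambda))=\dim V(\lambda)$; and the multicone favourability hypothesis gives $\dim V(\lambda)=\#S_\lambda$. For (i), the natural surjection $V(\lambda+\mu)^a=(V(\lambda)\odot V(\mu))^a\twoheadrightarrow V(\lambda)^a\odot V(\mu)^a$ (of the type discussed before Proposition~\ref{propabelian}) has both sides of dimension $\#S_{\lambda+\mu}=\#(S_\lambda+S_\mu)$, so it must be an isomorphism.

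For (ii) I take $P(V(\lambda)):=P_\lambda=a_1P_1+\cdots+a_nP_n$. The dimension condition of Definition~\ref{singleconeredfn} reduces, via $V(\lambda)^{\odot k}\simeq V(k\lambda)$, to the hypothesis applied to $k\lambda$: $\dim V(k\lambda)=\#S_{k\lambda}=\#(kS_\lambda)$. The main obstacle lies in the polytope-theoretic half of the definition---verifying that the lattice points of $P_\lambda$ are precisely $S_\lambda$ and that $P_\lambda$ is normal. This is the only step that cannot be settled by purely formal manipulation; the key input is that $\operatorname{es}(V(k\lambda))=kS_\lambda$ holds for every $k\geq 1$, which together with the normality of each $P_i$ is used to pin down the lattice points of $kP_\lambda=P_{k\lambda}$ as exactly $kS_\lambda$.

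With (ii) in hand, the remaining items are essentially formal. Part (iii) is the result of applying Theorem~\ref{fav}, Corollary~\ref{corpropabelian}, Proposition~\ref{propabelian} and Corollary~\ref{normality} to each individual $V(\lambda)$. For (iv), the inclusion $V(\lambda+\mu)^a\hookrightarrow V(\lambda)^a\otimes V(\mu)^a$ supplied by (i) sends $v_{\lambda+\mu}\mapsto v_\lambda\otimes v_\mu$; composing with the Segre embedding $\bP(V(\lambda)^a)\times\bP(V(\mu)^a)\hookrightarrow\bP(V(\lambda)^a\otimes V(\mu)^a)$ identifies the orbit of $[v_{\lambda+\mu}]$ with the orbit of $([v_\lambda],[v_\mu])$ and yields the desired embedding of closures. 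Finally for (v), I induct on $|\lambda|=\sum a_i$ to reduce to the canonical weight $\lambda_S=\sum_{i\in\sup\lambda}\omega_i$: whenever $a_j\geq 2$ with $j\in\sup\lambda$, write $\lambda=\lambda'+\omega_j$ so that $\sup\lambda'=\sup\lambda$, and combine (iv), the higher-embedding isomorphism of Section~\ref{higherembeddings}, and a stabilizer comparison---in $\bU^a$ the stabilizer of $[v_\mu]$ is just the annihilator of $v_\mu$ in $\fn^{-,a}$, and the derivative action on $V(\lambda')^a\otimes V(\omega_j)^a$ together with $j\in\sup\lambda'$ forces the annihilators of $v_\lambda$ and $v_{\lambda'}$ to agree---to conclude that the abelianized orbit closures coincide.
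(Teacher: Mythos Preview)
Your overall strategy coincides with the paper's: establish $\operatorname{es}(V(\lambda))=S_\lambda$ by a cardinality count, deduce (i) and (vi) immediately, verify the single-module favourability of each $V(\lambda)$ for (ii), and then read off (iii)--(v) from the earlier sections.

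There is, however, a misreading of the multicone definition that inflates (ii) into an ``obstacle'' it is not. In the definition, $S_\lambda$ denotes the set of lattice points of $P_\lambda=a_1P_1+\cdots+a_nP_n$, and the hypothesis $\dim V(\lambda)=\#S_\lambda=\#\bigl(\sum_i a_iS(\omega_i)\bigr)$ is an equality of three numbers, the last two of which force $S_\lambda=\sum_i a_iS(\omega_i)$ as sets (one inclusion being obvious). Applied to $k\lambda$ this gives at once that the lattice points of $kP_\lambda=P_{k\lambda}$ are $kS_\lambda$, so $P_\lambda$ is normal with lattice-point set $\operatorname{es}(V(\lambda))$---exactly the paper's one-line argument. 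Under your reading (where $S_\lambda$ is \emph{defined} as $\sum_i a_iS_i$) the hypothesis is strictly weaker, and the ``key input'' you invoke---normality of the individual $P_i$---does not in general control the lattice points of a Minkowski sum $\sum a_iP_i$; there can be extra lattice points not in $\sum a_iS_i$. So your sketch does not close the gap you created.

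Your route to (v) also diverges from the paper and is not complete as written. Matching annihilators in $\fn^{-,a}$ shows that the open $\bU^a$-orbits in $\bP(V(\lambda)^a)$ and $\bP(V(\lambda')^a)$ are isomorphic, but you then need that the projection $\Fl_{\fn^{-,a}}(V(\lambda)^a)\to\Fl_{\fn^{-,a}}(V(\lambda')^a)$ (coming from (iv)) is an isomorphism on closures, and a proper birational map between normal varieties need not be one. The paper instead iterates (iv) to embed $\Fl_{\fn^{-,a}}(V(\lambda)^a)$ into $\prod_{i\in\sup\lambda}\Fl_{\fn^{-,a}}(V(\omega_i)^a)^{a_i}$ and observes that the diagonal closed embedding $\prod_{i\in\sup\lambda}\Fl_{\fn^{-,a}}(V(\omega_i)^a)\hookrightarrow\prod_{i\in\sup\lambda}\Fl_{\fn^{-,a}}(V(\omega_i)^a)^{a_i}$ carries the orbit of $([v_{\omega_i}])_{i\in\sup\lambda}$ onto the orbit of $([v_{\omega_i}]^{a_i})_{i\in\sup\lambda}$; being closed, it carries closure to closure, and the dependence only on $\sup\lambda$ drops out with no stabilizer analysis needed.
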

\proof
We have a natural surjective map $V({\lambda+\mu})^a\rightarrow V(\lambda)^a\odot V(\mu)^a$.
Since $(G,\underline\fn^-)$ is favourable, one has
$$
\text{es}(V({\la+\mu}))=S(\la+\mu)=S(\lambda)+S(\mu)=\text{es}(V({\la}))+\text{es}(V({\mu})),
$$
so by Proposition~\ref{plus} this map is injective and hence an isomorphism. The Minkowski sum of lattice polytopes
is a lattice polytope, so the condition being favourable
implies that all the polytopes $P(\la)$ are normal. As an immediate consequence
we see: all representations $V(\la)$ are favourable and the varieties
$\Fl_{\bU^a}(V(\la)^a)$ and $\Fl_{\bU^a}(V(\la)^t)$ are
projectively normal and arithmetically Cohen-Macaulay
(Corollary~\ref{normality}). Similarly, part {\it vi)} is a consequence of Corollary~\ref{essbasis}.

The Segre embedding $\bP(V(\la)^a)\times \bP(V(\mu)^a)\hookrightarrow \bP(V(\la)^a\otimes V(\mu)^a)$
and the isomorphism $V({\lambda+\mu})^a\simeq V(\lambda)^a\odot V(\mu)^a$ implies that the
image of $\Fl_{\bU^a}(V({\la+\mu})^a)$ in $\bP(V(\la)^a\otimes V(\mu)^a)$ lies in the embedded
product $\Fl_{\bU^a}(V(\la)^a)\times \Fl_{\bU^a}(V(\mu)^a)$. By embedding $\Fl_{\bU^a}(V(\la)^a)$
in the corresponding product of degenerate flag varieties for fundamental weights,
it is easy to see that $\Fl_{\bU^a}(V(\la)^a)\simeq \Fl_{\bU^a}(V(\nu)^a)$ for
$\nu=\sum_{\omega\in \sup \la}\omega $, and hence  $\Fl_{\bU^a}(V(\la)^a)$ depends only on the support of $\la$.
\endproof
\begin{rem}
In the next section we will see that $(G,\underline\fn^-)$ is favourable for type ${\tt A}_n$, ${\tt C}_n$ and ${\tt G}_2$.
\end{rem}

\section{The classical examples}\label{flags}
In this section we illustrate the construction of the previous section on the example of flag varieties
of classical groups.
Let $\g$ be a simple Lie algebra with the Cartan decomposition $\g=\fn\oplus\fh\oplus\fn^-$
and let $G$ be the corresponding semisimple, simply connected complex algebraic group.
As before, $\bU$ denotes the maximal unipotent subgroup with Lie algebra $\fn^-$.
Let $\triangle_+$ be the set of positive roots of $\g$ and let $\al_1,\dots,\al_n$ be the simple roots.
Let $f_\beta\in\fn^-$, $\beta\in\triangle_+$, be a root basis of $\fn^-$.

Let $\la$ be a dominant integral weight for the Lie algebra $\g$ and let $V(\la)$ be the corresponding irreducible $\g$-module of
highest weight $\la$. Fix a highest weight vector $v_\la\in V(\la)$; in particular, $\fn v_\la=0$ and $V(\la)=\U(\fn^-) v_\la$.
We will be interested in the degenerate modules $V(\la)^a$ and $V(\la)^t$ introduced above. To apply Theorem~\ref{theorem218}
we need to introduce an ordering $\beta_1,\dots,\beta_N$ of the positive roots and fix a homogeneous monomial order.
Then the set of essential monomials is fixed and we give a combinatorial description in terms of a normal polytope.\\
In the following we will consider only orderings having the following special property (we give examples of such orderings below):\\
Let ``$\succ$'' be the standard partial order on the set of positive roots. We assume that
\[
\beta_i\succ \beta_j \text{ implies } i<j.
\]
An ordering with this property (the larger roots come first) will be called a {\it good ordering}.
Once we fix such a good ordering, this induces an ordering on the basis vectors $f_\beta$.
As monomial order on the PBW basis we fix the induced homogeneous reverse lexicographic order.
\begin{example}\label{goodA}
In type $A_n$ ($\g=\msl_{n+1}$) the positive roots are of the form $\al_{i,j}=\al_i+\dots +\al_j$ for $1\le i\le j\le n$.
Here is an example of a good ordering in type $A_n$:
\begin{gather*}
\beta_1=\al_{1,n},\\
\beta_2=\al_{1,n-1},\ \beta_3=\al_{2,n},\\
\dots,\\
\beta_{(n-1)n/2+1}=\al_1,\dots, \beta_{n(n+1)/2}=\al_n.
\end{gather*}
\end{example}

\begin{example}
In type $C_n$ ($\g=\msp_{2n}$) the positive roots are of the form
\begin{gather*}
\al_{i,j}=\al_i+\al_{i+1}+\dots +\al_j,\ 1\le i\le j\le n,\\
\alpha_{i, \ol{j}} = \alpha_i + \alpha_{i+1} + \ldots +
\alpha_n + \alpha_{n-1} + \ldots + \alpha_j, \ 1\le i\le j\le n
\end{gather*}
(note that $\al_{i,n}=\al_{i,\ol n}$).
Here is an example of a good ordering in type $C_n$:
\begin{gather*}
\beta_1=\al_{1,\ol{1}},\\
\beta_2=\al_{1,\ol{2}},\ \beta_3=\al_{2,\ol{2}},\\
\dots,\\
\beta_{(n-1)n/2+1}=\al_{1,\ol{n}},\dots, \beta_{n(n+1)/2}=\al_{n,\ol{n}},\\
\beta_{n(n+1)/2+1}=\al_{1,n-1},\dots, \beta_{n(n+1)/2+n-1}=\al_{n-1,n-1},\\
\dots,\\
\beta_{n^2}=\al_{1,1}.
\end{gather*}
\end{example}

We now recall the polytopes describing the basis of the PBW graded modules in types $A$ and $C$ (\cite{FFoL1}, \cite{FFoL2}, \cite{FFoL3})
and the basis in type $G_2$ from \cite{G}. We just write $P(\la)$ instead of $P(V(\la))$.

\subsection{Type \texorpdfstring{${\tt A}_n$}{An}}\label{typeA}
Let $\om_1,\dots,\om_n$ be the fundamental weights, we denote $f_{\al_{i,j}}$ by $f_{i,j}$.
Note that $N=\dim\fn^-=n(n+1)/2$.

For a dominant integral weight $\la=\sum_{k=1}^n m_k\om_k$, $m_k\in\bZ_{\ge 0}$,
we define a polytope $P(\lambda)\subset \bR^N_{\ge 0}$ and the set $S(\la)\subset \bZ^N_{\ge 0}$ as follows:

A sequence ${\bf b}=(\beta_1,\dots,\beta_r)$ of positive roots is called a {\it Dyck path} if the first and the last roots are
simple roots ($\beta_1=\al_{i,i}$, $\beta_r=\al_{j,j}$, $i\le j$), and if $\beta_m=\al_{p,q}$, then $\beta_{m+1}=\al_{p+1,q}$ or
$\beta_{m+1}=\al_{p,q+1}$.

\begin{dfn}
The polytope $P(\lambda)\subset \bR^N_{\ge 0}$ is defined as the set of
points $\bp=(p_\beta)_{\beta\in\triangle_+}$ in $\bR^N_{\ge 0}$ satisfying the following
inequalities (with integer coefficients):
for all Dyck paths ${\bf b}$ with $\beta_1=\al_{i,i}$, $\beta_r=\al_{j,j}$ one has
\[
p_{\beta_1}+p_{\beta_2}+\dots +p_{\beta_r}\le m_i+\dots +m_j.
\]
\end{dfn}
The set $S(\la)=P(\la)\cap \bZ^N_{\ge 0}$ is the set of lattice points in $P(\la)$.
We proved in \cite{FFoL1} that $\{\bff^\bp v_\la\mid \bp\in S(\la)\}$
forms a basis of $V(\la)^a$ and hence of $V(\la)$ itself.

\subsection{Type \texorpdfstring{${\tt C}_n$}{Cn}}
Let $\om_1,\dots,\om_n$ be the fundamental weights,
we will use the following abbreviations for the roots and the operators:
$$
\al_i = \al_{i,i},\ \al_{\ol{i}} = \al_{i,\ol{i}},\ f_{i,j}=f_{\al_{i,j}},\
f_{i,\ol j}=f_{\al_{i,\ol j}}.
$$
Note that $N=\dim\fn^-=\#\triangle_+=n^2$.
We recall the usual order on the alphabet $A = \{1, \ldots, n, \ol{n-1}, \ldots, \ol{1}\}$
$$ 1 <2 < \ldots < n-1 < n < \ol{n-1} < \ldots < \ol{1}.$$
A {\it symplectic Dyck path} is a sequence ${\bf b}=(\beta_1, \dots, \beta_r)$ of positive roots such that:
the first root is a simple root, $\beta_1=\al_{i,i}$; the last root is either a simple root $\beta_r= \al_j$ or
$p(k) = \al_{\ol{j}}$ ($i \le j \leq n$); if $\beta_m=\al_{r,q}$ with $r, q \in A$ then $\beta_{m+1}$ is
either $\al_{r,q+1}$ or $\al_{r+1,q}$, where $x+1$ denotes the smallest element in $A$ which is bigger than $x$.

\begin{dfn}
The polytope $P(\lambda)\subset \bR^N_{\ge 0}$ is defined as the set of
points $\bp=(p_\beta)_{\beta\in\triangle_+}$ in $\bR^N_{\ge 0}$ satisfying the following
inequalities (with integer coefficients): for all Dyck paths
${\bf b}$ with $\beta_1=\al_{i,i}$, $\beta_r=\al_{j,j}$ one has
\[
p_{\beta_1}+p_{\beta_2}+\dots +p_{\beta_r}\le m_i+\dots +m_j;
\]
for all Dyck paths ${\bf b}$ with $\beta_1=\al_{i,i}$, $\beta_r=\al_{j,\ol{j}}$ one has
\[
p_{\beta_1}+p_{\beta_2}+\dots +p_{\beta_r}\le m_i+\dots +m_n.
\]
\end{dfn}
The set $S(\la)=P(\la)\cap \bZ^N_{\ge 0}$ is the set of lattice points in $P(\la)$.
We proved in \cite{FFoL2} that the set $\{\bff^\bp v_\la\mid \bp\in S(\la)\}$
forms a basis of $V(\la)^a$ and hence of $V(\la)$ itself.

\subsection{Type \texorpdfstring{${\tt G}_2$}{G2}}
Let $\al_1,\al_2$ be simple roots. The six positive roots are as follows:
\[
\beta_1=3\al_1+2\al_2,\ \beta_2=3\al_1+\al_2,\ \beta_3=2\al_1+\al_2,\ \beta_4=\al_1+\al_2,\ \beta_5=\al_2,\ \beta_6=\al_1.
\]
We note that this ordering is good.
Let $\la=k\omega_1+l\omega_2$, $k,l\ge 0$.

\begin{dfn}
The polytope $P(\lambda)\subset \bR^6_{\ge 0}$ is defined as the set of
points $\bp=(p_\beta)_{\beta\in\triangle_+}$ in $\bR^6_{\ge 0}$ satisfying the following
inequalities (with integer coefficients):
\begin{gather*}
p_5\le l,\ p_6\le k,\\
p_2+p_3 + p_6\le k+l,\quad p_3+p_4+p_6\le k+l,\quad p_4+p_5+p_6\le k+l,\\
p_1+p_2+p_3+p_4+p_5\le k+2l,\qquad p_2+p_3+p_4+p_5+p_6\le k+2l.
\end{gather*}
\end{dfn}
The set $S(\la)=P(\la)\cap \bZ^6_{\ge 0}$ is the set of lattice points in $P(\la)$.
It is proved in \cite{G} that the set $\{\bff^\bp v_\la\mid \bp\in S(\la)\}$
is a basis of $V(\la)^t$ and hence of $V(\la)^a$
and $V(\la)$.

\subsection{Essential sets and \texorpdfstring{$S(\la)$}{the lattice points in the polytope}}
To apply Theorem~\ref{theorem218} we need to prove that the ordering gives a favourable pair.
So we need to show that $P(\lambda)$ is the Minkowski sum of the polytopes for the fundamental weights and furthermore $P(\lambda)$ is normal.
We recall the following proposition (\cite{FFoL3}, \cite{G}):
\begin{prop}\label{classicalminkowski}
Let $\g$ be of type $A_n$, $C_n$ or $G_2$. Then for any two dominant weights $\la$ and $\mu$ one has $S(\la+\mu)=S(\la)+S(\mu)$.
\end{prop}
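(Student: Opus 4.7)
The plan is to prove the two inclusions separately. The inclusion $S(\la) + S(\mu) \subseteq S(\la+\mu)$ is immediate: in each of types ${\tt A}_n$, ${\tt C}_n$, ${\tt G}_2$ every defining inequality of $P(\nu)$ reads $\sum_{\beta \in {\bf b}} p_\beta \le L_{\bf b}(\nu)$, where $L_{\bf b}$ depends linearly, with nonnegative integer coefficients, on the expansion coefficients $m_i$ of $\nu=\sum m_i\om_i$. Hence $L_{\bf b}(\la)+L_{\bf b}(\mu) = L_{\bf b}(\la+\mu)$, and adding the inequalities valid at $\bp^1 \in S(\la)$ and $\bp^2 \in S(\mu)$ shows $\bp^1+\bp^2 \in P(\la+\mu)$; nonnegativity and integrality are obviously preserved, so $\bp^1+\bp^2 \in S(\la+\mu)$.

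The reverse inclusion $S(\la+\mu) \subseteq S(\la)+S(\mu)$ carries all the content. I would reduce to the case $\mu = \om_i$ a fundamental weight by induction on the height $\sum m_j$ of $\mu$: once one establishes $S(\nu+\om_i) = S(\nu) + S(\om_i)$ for every dominant $\nu$ and every fundamental $\om_i$, iteration yields the general statement. For the reduction step, given $\bp \in S(\la+\om_i)$, I would construct a lattice point $\bp^1 \in S(\om_i)$ with $\bp^1\le \bp$ componentwise such that $\bp - \bp^1 \in S(\la)$; the componentwise bound automatically makes the difference a nonnegative integer vector.

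The construction of $\bp^1$ is a priority/greedy rule: using the explicit combinatorial description of $S(\om_i)$, which is small and well understood in each type, one selects a single element that simultaneously ``absorbs'' the portions of the various Dyck path bounds attributable to $\om_i$. The main obstacle is showing that the resulting $\bp-\bp^1$ satisfies \emph{all} Dyck path inequalities for $\la$ at once, which is nontrivial because distinct paths overlap and share root coordinates, so a root-by-root decomposition is in general incompatible across paths. This is the technical heart of \cite{FFoL3} and \cite{G}: in types ${\tt A}_n$ and ${\tt C}_n$ the argument hinges on an analysis of which minimal Dyck paths become saturated and a verification that the greedy assignment transfers compatibly across overlapping paths, while for type ${\tt G}_2$, after the $\om_i$-reduction one is left with a fixed finite list of inequalities to check, amenable to a direct case analysis on the shape of the Dyck paths (or even a polyhedral computation).
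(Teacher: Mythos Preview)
The paper does not give its own proof of this proposition: it merely recalls the statement and cites \cite{FFoL3} and \cite{G}. Your sketch is a faithful outline of the argument in those references---the easy inclusion via linearity of the Dyck-path bounds, reduction of the hard inclusion to $\mu=\om_i$ by induction on height, and the greedy construction of $\bp^1\in S(\om_i)$ with $\bp-\bp^1\in S(\la)$---so there is nothing to compare beyond noting that you have correctly identified both the structure and the technical crux of the cited proofs.
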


\begin{lem}\label{polyacg}
The polytopes $P(\la)$ defined above for $\fg$ of type $\tt A_n$, $\tt C_n$ or $\tt G_2$ are normal.
\end{lem}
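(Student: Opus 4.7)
The plan is to deduce normality as a direct corollary of Proposition~\ref{classicalminkowski}. The first observation is that in each of the three types, the defining inequalities of $P(\lambda)$ all have the shape $p_{\beta_1} + \cdots + p_{\beta_r} \le c_{\mathbf b}(\lambda)$, where $c_{\mathbf b}(\lambda)$ is a fixed $\bZ$-linear combination of the coefficients $m_i$ of $\lambda = \sum m_i \omega_i$ prescribed by the (symplectic) Dyck path ${\mathbf b}$, respectively by the finite list of inequalities in type ${\tt G}_2$. Since only the right-hand sides depend on $\lambda$, and they do so linearly, dilation commutes with the $\lambda$-dependence, giving the key identity $nP(\lambda) = P(n\lambda)$ for every positive integer $n$.

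The second step is to iterate Proposition~\ref{classicalminkowski} with $\mu = \lambda, 2\lambda, \ldots$ to conclude that $S(n\lambda) = n\cdot S(\lambda)$ as Minkowski sums for every $n \ge 1$. Combining this with the scaling identity one obtains
\[
(nP(\lambda)) \cap \bZ^N \;=\; P(n\lambda) \cap \bZ^N \;=\; S(n\lambda) \;=\; n\cdot S(\lambda) \;=\; n\cdot \bigl(P(\lambda) \cap \bZ^N\bigr),
\]
which is precisely the Minkowski-sum property appearing in the definition of a normal polytope.

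Finally, one must verify that $P(\lambda)$ itself is a lattice polytope. The cleanest route is to use that for any rational polytope $Q$ one has $nQ = \mathrm{conv}(nQ \cap \bZ^N)$ for all sufficiently large $n$ (a standard consequence of Ehrhart theory); applied to $Q = P(\lambda)$, together with the Minkowski identity just proved, this forces $nP(\lambda) = n\cdot \mathrm{conv}(S(\lambda))$, so $P(\lambda) = \mathrm{conv}(S(\lambda))$ is automatically the convex hull of lattice points. Alternatively, one can inspect the inequality systems directly in each type and check that every vertex is integer; in types ${\tt A}_n$ and ${\tt C}_n$ the Dyck path constraints have a totally-unimodular-like structure, and in ${\tt G}_2$ the list is short enough for a finite verification.

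The substantive content is thus entirely absorbed into Proposition~\ref{classicalminkowski}, imported from \cite{FFoL3} and \cite{G}; granted that result the lemma is a short deduction. The only place I expect the argument to require attention is the scaling identity $nP(\lambda) = P(n\lambda)$: one must confirm that the explicit inequality lists in Section~\ref{flags} depend linearly in the $m_i$ rather than only piecewise linearly. Inspection of the three definitions shows this is the case, so no genuine obstacle remains.
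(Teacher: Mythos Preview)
Your argument is essentially the same as the paper's: both hinge on the scaling identity $nP(\lambda)=P(n\lambda)$ (the paper uses it implicitly when asserting $qv\in S(q\lambda)$) together with Proposition~\ref{classicalminkowski}. One small correction: the Ehrhart-type assertion ``$nQ=\mathrm{conv}(nQ\cap\bZ^N)$ for all sufficiently large $n$'' is false for a general rational polytope (take $Q=[0,\tfrac12]$ and $n$ odd); what is true, and all you need, is that this holds for \emph{some} $n$, namely any $n$ clearing the denominators of the vertices. The paper handles the lattice-polytope step more directly and avoids this detour: it picks any rational point $v\in P(\lambda)$, chooses $q$ with $qv$ integral, writes $qv\in S(q\lambda)=q\cdot S(\lambda)$ via Proposition~\ref{classicalminkowski}, and concludes $v\in\mathrm{conv}\,S(\lambda)$, so every vertex is a lattice point.
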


\begin{proof}
The polytopes are defined by inequalities with integer coefficients, hence the vertices
have rational coordinates. Let now $v\in P(\lambda)$ be a point with rational coordinates.
Fix $q\in \bN$ such that $qv$ has integral coordinates, so $qv\in S(q\la)$.
By Proposition~\ref{classicalminkowski}, $qv$ is an element of
the $q$-fold Minkowski sum of $S(\lambda)$, so one can write
$qv= s_1 + s_2 + \ldots + s_q$, where $s_1, s_2, \ldots, s_q \in S(\lambda)$,
and hence $v= \frac{1}{q} s_1 + \ldots \frac{1}{q} s_q$ is in the convex hull of the lattice points of
$P(\lambda)$. It follows that $P(\lambda)$ is a lattice polytope, which is normal  by Proposition~\ref{classicalminkowski}.
\end{proof}
\begin{thm}
Let $\g$ be of type $A_n$, $C_n$ or $G_2$. Assume that the positive roots are ordered and the ordering is good.
Then the pair $(G,\underline\fn^-)$ is favourable.
\end{thm}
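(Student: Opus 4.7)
My plan is to verify the two defining conditions of the favourable pair for $(G,\underline\fn^-)$. Setting $P_i:=P(\om_i)$ and $S_i:=S(\om_i)$, the task is to show (i) each $P_i$ is a normal polytope with $S_i=\text{es}(V(\om_i))$, and (ii) $\dim V(\la)=\sharp S_\la=\sharp(\sum_i a_iS_i)$ for every dominant $\la=\sum a_i\om_i$, where $S_\la$ denotes the set of lattice points of the Minkowski sum $\sum_i a_iP_i$.

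For (i), normality of $P_i$ is Lemma~\ref{polyacg} applied to $\la=\om_i$. For the equality $\text{es}(V(\om_i))=S_i$, I would invoke the cited references \cite{FFoL1}, \cite{FFoL2}, \cite{G}: each establishes that $\{\bff^\bp v_\la\mid \bp\in S(\la)\}$ is a basis of $V(\la)$ in the respective type, and their proofs in fact supply an explicit straightening procedure rewriting $\bff^\bp v_\la$ for $\bp\notin S(\la)$ as a linear combination of monomials $\bff^\bq v_\la$ with $\bq<\bp$ in the homogeneous reverse lexicographic order. Consequently no non-admissible $\bp$ can be essential, so $\text{es}(V(\la))\subseteq S(\la)$, and Corollary~\ref{essbasis} forces equality by a cardinality count. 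Specialising $\la=\om_i$ yields the claim.

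For (ii), I would first identify the Minkowski polytope $\sum_i a_iP_i$ with the Dyck-path polytope $P(\la)$. By Lemma~\ref{polyacg} each $P_i$ is a lattice polytope, hence $P_i=\text{conv}(S_i)$, and likewise $P(\la)=\text{conv}(S(\la))$. Since Proposition~\ref{classicalminkowski} gives $S(\la)=\sum_i a_iS_i$, the standard identity $\text{conv}(A+B)=\text{conv}(A)+\text{conv}(B)$ for Minkowski sums implies
\[
P(\la)=\text{conv}\bigl(\sum_i a_iS_i\bigr)=\sum_i a_i\,\text{conv}(S_i)=\sum_i a_iP_i.
\]
In particular $S_\la=S(\la)=\sum_i a_iS_i$, and combining this with $\dim V(\la)=\sharp S(\la)$ (again from the cited references) yields both equalities in (ii).

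The delicate step, and the one I expect to be the main obstacle, is the inclusion $\text{es}(V(\la))\subseteq S(\la)$: one must confirm that the straightening relations constructed in \cite{FFoL1}, \cite{FFoL2}, \cite{G} genuinely decrease the multi-exponent in \emph{our} fixed monomial order. This is precisely where the good-ordering hypothesis $\beta_i\succ\beta_j\Rightarrow i<j$ is used, together with the choice of the induced homogeneous reverse lexicographic order on monomials. Once that compatibility is verified type by type, the theorem follows by a clean assembly of Lemma~\ref{polyacg}, Proposition~\ref{classicalminkowski} and the basis/dimension statements from the cited references.
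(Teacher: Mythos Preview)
Your assembly of Lemma~\ref{polyacg}, Proposition~\ref{classicalminkowski}, and the convex-hull identity $\text{conv}(A+B)=\text{conv}(A)+\text{conv}(B)$ to handle condition~(ii) is fine and matches what the paper does (if somewhat more explicitly). The gap is exactly where you place it, but it is a real gap and the paper does \emph{not} close it the way you suggest.

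The references \cite{FFoL1} and \cite{FFoL2} establish that $\{\bff^\bp v_\la\mid \bp\in S(\la)\}$ is a basis of $V(\la)^a$, the associated graded for the PBW filtration. Their straightening relations therefore control only the \emph{total degree}: a non-admissible $\bff^\bp v_\la$ is rewritten, in $V(\la)^a$, as a combination of admissible monomials of the same total degree; lifting to $V(\la)$ introduces only correction terms of strictly smaller total degree. Nothing in those papers asserts that the admissible monomials appearing at the top degree satisfy $\bq<\bp$ in the homogeneous reverse lexicographic order, and the good-ordering hypothesis alone does not supply this. (For $G_2$ the situation is genuinely different: \cite{G} computes a basis of $V(\la)^t$, which is precisely the statement $\text{es}(V(\la))=S(\la)$.) So your appeal to \cite{FFoL1}, \cite{FFoL2} for the inclusion $\text{es}(V(\la))\subseteq S(\la)$ is not justified as stated.

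The paper avoids this entirely by giving a direct argument for the fundamental weights only, exploiting the explicit combinatorics of $V(\om_k)=\Lambda^k(V(\om_1))$. Each wedge basis vector $w_I$ is hit by finitely many monomials $\bff^\bp v_\la$, parametrised by permutations $\sigma\in S_{k-s}$ of the complementary index set; one checks by hand, using the good ordering on roots, that the minimal such monomial corresponds to $\sigma=\mathrm{id}$ and lies in $S(\om_k)$. This yields $\text{es}(V(\om_k))=S(\om_k)$ without any straightening machinery, and the symplectic case is handled analogously. For general $\la$ the paper then uses the semigroup inclusion of Proposition~\ref{plus} together with $\sharp(S(\la)+S(\mu))=\sharp S(\la+\mu)=\dim V(\la+\mu)$ to force equality --- essentially your counting argument, but bootstrapping from the fundamental case rather than invoking straightening for arbitrary $\la$.
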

\begin{proof}
By Proposition~\ref{classicalminkowski} and Lemma~\ref{polyacg}, it remains to show that the set ${\rm es}(V(\la))$ coincides with $S(\la)$.
The case of $G_2$ is worked out in \cite{G}, where it is proved that $S(\la)$ indexes a basis of $V(\la)^t$.

Let $\g=\msl_n$. First, we prove the theorem for fundamental weights $\la=\omega_k$.
Then $V(\la)=\Lambda^k(V({\omega_1}))$ and $V({\omega_1})$ is the $n$-dimensional vector representation.
Fix the standard basis $w_1,\dots,w_n$ of $V({\omega_1})$. We denote by $w_{i_1,\dots,i_k}$ the wedge product $w_{i_1}\wedge\dots\wedge w_{i_k}$.
For example, $v_\la=w_{1,2,\dots,k}$. Let
\[
I=\left\{i_1,\dots,i_k \right\} \; \text{ with } \ 1\le i_1<\dots <i_s\le k <i_{s+1}<\dots <i_k\le n.
\]
Then we set $w_I := w_{i_1}\wedge\dots\wedge w_{i_k}$ and note that $w_I \neq 0$ in $V(\lambda)$. 
Further, let $J=\{1,\dots,k\}\setminus \{i_1,\dots,i_s\}$. We write $J=(j_1,\dots,j_{k-s})$, where $k < j_1<\dots <j_{k-s} \leq n $. There might be several
multi-exponents $\bp$ such that $v_M(\bp)=w_I$. We claim that the minimal monomial (and hence the essential one) is
\begin{equation}\label{monom}
f_{j_1,i_k-1}f_{j_2,i_{k-1}-1}\dots f_{j_{k-s},i_{s+1}-1}v_\la
\end{equation}
(recall that $f_{i,j}w_i=w_{j+1}$).
In fact, first of all the minimal length of a monomial is exactly $k-s$. Now a monomial $\bff^\bp$ such that $\bff^\bp v_\la$ is proportional to
$w_I$ is of the form
\begin{equation}\label{vect}
f_{j_{\sigma(1)},i_k-1}f_{j_{\sigma(2)},i_{k-1}-1}\dots f_{j_{\sigma(k-s)},i_{s+1}-1}v_\la
\end{equation}
for some permutation $\sigma\in S_{k-s}$. We claim that the minimal monomial \eqref{vect} corresponds to $\sigma=\rm{id}$. 
In fact, the minimal root vector among all $f_{j_\sigma(\ell), i_{\ell}-1}$ is $f_{j_1,i_k-1}$ 
(see the ordering in Example \ref{goodA}). \\
This implies that there is a factor $f_{j_1,i_k-1}$  in the minimal monomial with $f^\bp v_\lambda=w_I$. 
By proceeding in the same way we obtain the claim by downwards induction. 
It suffices now to note that \eqref{monom} belongs to $S(\omega_k)$. We thus obtain the inclusion
$es(V(\omega_k)) \subseteq S(\omega_k)$. Since the cardinalities of these sets coincide, we have the equality.

Similarly we check that for fundamental weights of the symplectic algebras ${\rm es}(V({\omega_k}))=S(\omega_k)$.

Now let us consider the general $\la$. Thanks to Proposition \ref{plus} we know that for dominant weights  $\la$ and $\mu$
we have ${\rm es}(V(\la))+{\rm es}(V(\mu))\subset {\rm es}(V({\la+\mu}))$. But
\[
\sharp({\rm es}(V(\la))+{\rm es}(V(\mu)))=\sharp (S(\la)+ S(\mu))=\sharp S(\la+\mu)=\dim V(\la+\mu)= \sharp {\rm es}(V(\la+\mu))
\]
(here, the third equality is proved in \cite{FFoL1}). We conclude that the equalities ${\rm es}(V({\omega_k}))=S(\omega_k)$ for $k=1,\ldots,n$ imply
${\rm es}(V(\la))=S(\la)$ for any dominant weight $\la$.

The proof in the case of $\g=\msp_{2n}$ is similar, it suffices to consider the fundamental weights.
\end{proof}
As in Section~\ref{NObody} we deduce:
\begin{cor}
For all dominant weights $\la$,
there exists an appropriate evaluation on the field $\bC(\bU.[v_\lambda])$ respectively $\bC(\bU^a.[v_\lambda])$, such that
% we get the following
%interpretation of the polytope in terms of Newton-Okounkov bodies:
the polytope $P(\la)$ is the Newton-Okounkov body of $\Fl_{\bU}(V(\la))$ and of $\Fl_{\bU^a}(V(\la)^a)$.
\end{cor}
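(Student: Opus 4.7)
The plan is to deduce this as a direct corollary of the Newton--Okounkov body theorem from Section~\ref{NObody}, applied to the representations $V(\la)$ in light of the favourability just established. Concretely, the preceding theorem shows that for $\g$ of type ${\tt A}_n$, ${\tt C}_n$ or ${\tt G}_2$, together with the good ordering of the positive roots and the induced homogeneous reverse lexicographic order, the pair $(G,\underline{\fn^-})$ is favourable. By Theorem~\ref{theorem218}(ii), every irreducible highest-weight module $V(\la)$ is then itself a favourable $\fn^-$-module, and the associated normal polytope (in the sense of Definition~\ref{singleconeredfn}) is exactly $P(\la)$, because ${\rm es}(V(\la))=S(\la)$ is precisely the set of lattice points of $P(\la)$.

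The second step is to specialize the construction of Section~\ref{NObody} to the case $\mathfrak N=\fn^-$, $M=V(\la)$. Using the basis $f_1,\dots,f_N$ of $\fn^-$ with dual basis $x_1,\dots,x_N$, the open dense unipotent orbit $\bU.[v_\la]\subseteq\Fl_{\fn^-}(V(\la))$ identifies $\bC(\Fl_{\fn^-}(V(\la)))\simeq\bC(x_1,\dots,x_N)$, and we take $\nu$ to be the lowest-term valuation with respect to these parameters and the induced homogeneous monomial order. The theorem of Section~\ref{NObody} then gives directly
$$
\Delta\bigl(\Fl_{\fn^-}(V(\la))\bigr)=P(V(\la))=P(\la).
$$
An entirely parallel construction performed on the abelianized orbit $\bU^a.[v_\la]\subseteq\Fl_{\fn^{-,a}}(V(\la)^a)$ (using the same basis, dual basis and monomial order, but now on the abelianized unipotent group) yields the corresponding identification $\Delta\bigl(\Fl_{\fn^{-,a}}(V(\la)^a)\bigr)=P(\la)$.

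There is no real obstacle here: all substantive work has already been done, either in establishing the favourability of $(G,\underline{\fn^-})$ in the previous theorem (the non-trivial content being ${\rm es}(V(\la))=S(\la)$ together with normality and the Minkowski-sum property of the polytopes) or in the general Newton--Okounkov result of Section~\ref{NObody}. The corollary is essentially a packaging statement, the only point worth spelling out being the compatibility of the combinatorially defined polytope $P(\la)$ from Sections~\ref{typeA}ff.\ with the polytope abstractly attached to the favourable module $V(\la)$, which is immediate from the identification ${\rm es}(V(\la))=S(\la)$ recorded just above.
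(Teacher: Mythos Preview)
Your proposal is correct and follows exactly the route the paper intends: the paper's own justification is simply the phrase ``As in section~\ref{NObody} we deduce'', and you have spelled out precisely that deduction --- favourability of $(G,\underline{\fn^-})$ gives favourability of each $V(\la)$ with polytope $P(\la)$ via Theorem~\ref{theorem218}(ii), and then the Newton--Okounkov theorem of Section~\ref{NObody} applies verbatim.
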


\subsection{Automorphism group}\label{auto-grou}
Let us turn again to the $G = SL_{n+1}$-case and regular $\lambda$, and use that we have an explicit description of the 
polytope $P(\lambda)$ in terms of inequalities defined by Dyck paths. Then $P(\lambda) \subset \mathbb{R}_{\geq 0}^{N}$ where $N = n(n+1)/2$, 
we denote $F \subset \mathbb{R}^N$ the associated normal fan. The rays $F(1)$ of $F$ are given by
\begin{enumerate}
\item $\mathbb R. e_{i,j}$, $1\le i\le j \le n$,
\item $\mathbb R.(-\sum_{k=1}^r e_{i_k,j_k})$ for all possible Dyck paths $(i_1,j_1),\ldots, (i_r,j_r)$.
\end{enumerate}
Now following \cite{Nil, Cox}, we define the Demazure roots of the toric variety
\[
\mathcal R=\{ m\in \mathbb{Z}^N \mid  \exists \tau\in F(1)\hbox{ \rm such that\ } \langle v_\tau,m\rangle=-1, \langle v_\eta,m\rangle\ge 0\ \forall \eta\in F(1),\eta\not=\tau \}.
\]
It is easy to see that if $\tau$ is a ray of type $(ii)$, then there is a unique Demazure root for this $\tau$, 
namely $m$ with $m_{1,n} = 1$ and $m_{i,j} = 0$ else and this is the unique semisimple root in $\mathcal R$.\\
For $\tau = \mathbb Re_{k,l}$, one has the following Demazure roots: 
\begin{itemize}
\item For $l=1 \; : \; m=(m_{i,j}) \hbox{\rm\ where\ }\ m_{k,1}=-1\hbox{\rm\  and\ }\exists k'>k: m_{k',1}=1\hbox{\rm\ and\ }m_{i,j}=0\ \forall (i,j)\not=(k,1),(k',1)
$,
\item for $k=n \; : \; m=(m_{i,j}) \hbox{\rm\ where\ }\ m_{n,l}=-1\hbox{\rm\  and\ }\exists l'>l: m_{n,l'}=1\hbox{\rm\ and\ }m_{i,j}=0\ \forall (i,j)\not=(n,k),(n,k').
$,
\item and else $m=(m_{i,j}) \hbox{\rm\ where\ }\ m_{k,l}=-1\hbox{\rm\  and\ }m_{i,j}=0\ \forall (i,j)\not=(k,l)$.
\end{itemize}
Then the number of Demazure roots is exactly
\[
1+\frac{1}{2}n(n+1)+\frac{1}{2}n(n-1)+\frac{1}{2}n(n-1)= \frac{3}{2}n^2-\frac{1}{2}n+1.
\]  
We conclude that the connected component of the automorphism group of the toric variety is the semidirect product of a reductive group with a $N$ dimensional torus, a semisimple
part isomorphic to $SL_2$ or $PSL_2$, and a $\frac{3}{2}n^2-\frac{1}{2}n-1$-dimensional unipotent radical.
\subsection{More examples}
For $G = SL_3$, the toric variety obtained in this way is isomorphic to the one
constructed in \cite{AB} via the Gelfand-Tsetlin polytope  (see also \cite{KM,GL}), 
but even for $G = SL_4$ the two are not isomorphic in general. To be more precise, it has been shown in \cite{BF, Fou2} that for $G=SL_n$, 
the toric varieties associated to the polytopes $P(\lambda)$ described in this paper 
and the Gelfand-Tsetlin-polytopes are isomorphic if and only if $\lambda$ is supported only on the first 
two nodes or on the last two nodes or on the first and last node. Recall that in \cite{ABS} a bijection between the 
integral points in the two polytopes has been provided. It would be quite interesting to have a geometric interpretation of this bijection. 
\\
Next take for $G = SL_4$ the total order on the positive roots obtained by the following reduced decomposition
$w_0 = s_2s_1s_3s_2s_1s_3$ of the longest Weyl group element (see \cite{AB, Lit}). In this case
our toric variety is isomorphic to the one in \cite{AB} associated to this reduced decomposition 
(for most of these computations we used the program \textit{polymake} (\cite{GJ}). One can show, again using \textit{polymake}, that for  $G = SL_6$, 
that there is no reduced decomposition of the longest Weyl group element, such that for regular $\lambda$ 
our toric variety is isomorphic to the one obtained from the corresponding string polytope.\\
Our construction of the polytope and the toric variety is very explicit, for example we are able to 
compute the rays of the associated normal fan, the Demazure roots and the automorphism group of the toric variety 
(for $G =SL_n$ and $\lambda$ regular) (see Section~\ref{auto-grou}).\\
Some other special cases are investigated in \cite{BD}. 
Let $\omega$ be a fundamental weight for $G$ such that $\langle\omega,\theta^\vee\rangle = 1$,
where $\theta$ is the highest root (this includes all minuscule and cominuscule fundamental weights).
\begin{coro}
There exists an ordering of the positive roots and a homogeneous
ordering on the monomials in $U(\fn^-)$ such that for all $m\ge 1:$
$V(m\omega)$ is favourable for $\bU$ with a highest weight vector as cyclic generator.
\end{coro}

Instead of working with the full $G$-representation it is natural to look also at special 
submodules stable under a maximal unipotent
subgroup $\bU$ of $G$, the standard example being Demazure submodules. One can show for $G=SL_n$ and $\lambda=m\omega$ a multiple of a fundamental
weight that all Demazure submodules are favourable $\bU$-modules (\cite{BF}). 
For arbitrary $\lambda$ a partial answer was provided in \cite{Fou}, while the general question remains open even in the $SL_n$-case. 

\section*{Acknowledgments}
We are grateful to Dmitry Timashev for his comments on the Vinberg construction and 
for providing us with the reference \cite{G}.

E.F. was partially supported by the Dynasty Foundation.
The article was supported within the framework of a subsidy granted to the HSE by the Government of the Russian Federation 
for the implementation of the Global Competitiveness Program
and within the framework of the Academic Fund Program at the National Research University Higher School of Economics (HSE) in 
2015--2016 (grant № 15-01-0024).

G.F. and P.L. were partially supported by the DFG Priority Program SPP 1388 on Representation Theory.

\end{document}